\documentclass[11pt]{amsart}
\usepackage{geometry}
\usepackage{amsmath, amssymb, amsthm, mathrsfs}
\usepackage{mathtools}
\usepackage[protrusion=true,expansion=false]{microtype}
\usepackage{enumitem}
\usepackage[dvipsnames]{xcolor}
\usepackage{hyperref}
\hypersetup{colorlinks=true, citecolor=red, linkcolor=blue, filecolor=magenta, urlcolor=red}
\usepackage[capitalize,nameinlink]{cleveref}

\theoremstyle{plain}
\newtheorem{thm}{Theorem}[section]

\newtheorem{lem}[thm]{Lemma}
\newtheorem{prop}[thm]{Proposition}

\theoremstyle{definition}
\newtheorem{defn}[thm]{Definition}
\newtheorem{ques}[thm]{Question}
\newtheorem{rem}[thm]{Remark}

\crefname{thm}{Theorem}{Theorems}
\crefname{lem}{Lemma}{Lemmas}
\crefname{prop}{Proposition}{Propositions}
\crefname{cor}{Corollary}{Corollaries}
\crefname{defn}{Definition}{Definitions}

\newcommand{\fp}{\mathfrak{p}}
\newcommand{\fq}{\mathfrak{q}}
\newcommand{\fm}{\mathfrak{m}}
\newcommand{\fn}{\mathfrak{n}}
\newcommand{\bbC}{\mathbb{C}}
\newcommand{\bbQ}{\mathbb{Q}}
\newcommand{\bbZ}{\mathbb{Z}}
\newcommand{\bbR}{\mathbb{R}}
\newcommand{\bbP}{\mathbb{P}}

\newcommand{\bbA}{\mathbb{A}}
\newcommand{\sO}{\mathcal{O}}

\DeclareMathOperator{\Spec}{Spec}
\DeclareMathOperator{\Frac}{Frac}
\DeclareMathOperator{\Cl}{Cl}
\DeclareMathOperator{\Pic}{Pic}

\DeclareMathOperator{\depth}{depth}
\DeclareMathOperator{\link}{link}
\DeclareMathOperator{\tci}{tci}
\DeclareMathOperator{\Hom}{Hom}
\DeclareMathOperator{\trdeg}{trdeg}
\DeclareMathOperator{\lcm}{lcm}

\DeclareMathOperator{\charac}{char}
\DeclareMathOperator{\ord}{ord}
\DeclareMathOperator{\divisor}{div}

\title{Boundedness of total Cartier indices for rational singularities in families}
\author{Jihao Liu, Ruicheng Hu, Sheng Qin}
\address{Department of Mathematics, Peking University, No. 5 Yiheyuan Road, Haidian District, Beijing 100871, China}
\address{Beijing International Center for Mathematical Research, Peking University, No. 5 Yiheyuan Road, Haidian District, Beijing 100871, China}
\email{liujihao@math.pku.edu.cn}
\email{huruicheng@stu.pku.edu.cn}
\email{qinsheng@stu.pku.edu.cn}
\subjclass[2020]{14B05, 14E30, 14D06, 14F45}
\keywords{Rational singularities, Cartier index, bounded families}
\date{\today}

\begin{document}

\begin{abstract}
We show that the total Cartier index of varieties with rational singularities in a bounded family is bounded. This solves a problem of Han and Jiang. The overall structure of the proof, which treats the surface case and the higher-dimensional case separately, was originated by generative AI, particularly the Rethlas system, and was substantially corrected and elaborated by hand.
\end{abstract}

\maketitle

\tableofcontents

\section{Introduction}\label{sec:intro}

We work over an algebraically closed field $k$ of characteristic $0$.

A result of Greb--Kebekus--Peternell \cite[Theorem~1.10 and Remark~1.11]{GKP16} shows that, for any variety of klt type \(X\), there exists a positive integer \(I\) such that \(ID\) is Cartier for every \(\mathbb Q\)-Cartier Weil divisor \(D\) on \(X\). Such a definition can be generalized beyond the category of klt type varieties. More precisely, we may define the following.

\begin{defn}[Total Cartier index]\label{def:tci}
Let $X$ be a normal noetherian scheme. For any $\mathbb Q$-Cartier Weil divisor $D$ on $X$, the \emph{Cartier index} of $D$ is the minimal positive integer $I$ such that $ID$ is Cartier. The \emph{total Cartier index} of $X$ is
\[
\tci(X) \coloneqq \lcm\{ \text{Cartier indices of all $\bbQ$-Cartier Weil divisors on } X \},
\]
when this least common multiple is finite, and $+\infty$ otherwise.
\end{defn}
A more general question to ask is whether in a bounded family of varieties $X\rightarrow B$, we have that $\tci(X_b)$ is bounded for any fiber. This is of course too optimistic as it is possible that $\tci(X_b)=+\infty$ for some $b$ by considering a cone over an elliptic curve. Nevertheless, \cite[Theorem~1.2]{HJ26} proved that, for a bounded family of varieties, the total Cartier indices of the fibers that are of klt type are bounded. With this in mind, Han and Jiang asked the following:

\begin{ques}[{\cite[Problem~4.5]{HJ26}}]\label{ques:HJ26-4.5}
   Are the total Cartier indices bounded for a bounded family of varieties with rational singularities?
\end{ques}

The goal of this paper is to provide an affirmative answer to Question~\ref{ques:HJ26-4.5}:

\begin{thm}\label{thm:main}
Let $\pi: X\rightarrow B$ be a projective morphism to a finite type $k$-scheme. Then there exists a positive integer $I$ satisfying the following. For any closed point $b\in B$ such that $X_b$ is normal projective of pure dimension with at most rational singularities, $\tci(X_b)$ divides $I$.
\end{thm}

\subsection*{Strategy of the proof}

We remark that \cite[Theorem 1.2]{HJ26} proved the case for klt type fibers, which is a weaker condition than rational singularities. The proof of \cite[Theorem 1.2]{HJ26} heavily relies on the minimal model program in families. In this paper, we choose an alternative approach.

The proof has a local core and a global reduction. The local core is the algebra theorem, namely Theorem~\ref{thm:universal_local}: if a normal rational local ring is as a prime localization of a complex affine scheme cut out in \(\bbA^N\) by a bounded number of equations of bounded degree, then the Cartier index of every closed point is bounded in terms only of those numerical bounds.

The proof of Theorem~\ref{thm:universal_local} is by induction on the local dimension. Dimension $\leq 1$ is immediate. In every higher dimension, the induction first provides a uniform multiple of the divisor which is Cartier on the punctured spectrum. The remaining index at closed points is then controlled by the torsion in \(H^2\) of the topological link. For surfaces, we use Brieskorn's computation of the class group of rational complex surface singularities and Poincar\'e duality for the link \cite{Bri68}. In dimension $\geq 3$, we use Koll\'ar's exponential sequence argument to inject the punctured local Picard group into \(H^2\) of the link \cite{Kol16}. A semialgebraic boundedness theorem, which comes from real algebraic geometry \cite{BPR06}, gives a uniform bound for this torsion in bounded degree.

It remains to pass from closed complex points to the prime localizations which occur in the induction. This is done by a finite residue field Noether normalization argument. After adjoining an algebraic closure of the residue field, the prime localization becomes a faithfully flat local base change of a closed point local ring over an algebraically closed field isomorphic to \(\bbC\). Flat divisorial base change transports the divisor to this closed point model, and faithfully flat descent then brings the resulting Cartierness back to the original local ring.

The global theorem follows by covering a projective family by finitely many affine charts of bounded degree, applying the universal local theorem on each chart, and reducing global \(\tci\) to closed point local Cartier indices. The final passage from complex coefficients to an arbitrary algebraically closed field of characteristic zero is by spreading out the family, the point, and the divisor over a finitely generated field, embedding that field into \(\bbC\), applying the complex result, and descending Cartierness by faithfully flat local base change.

\begin{rem}
Although out of the topic of this paper, it is worthy to mention that since our approach does not rely on the minimal model program, a lot of questions in moduli theory of structures with rational singularities (including some already solved questions in KSBA moduli and K-moduli theory) might be able to be solved in an easier and more straightforward way. We leave the applications to readers of potential interest. 
\end{rem}

\subsection*{Organization}

Section~\ref{sec:prelim} recalls some preliminaries and algebraic tools that will be used in the rest of the paper. Section~\ref{sec:layer4d} contains the topological input from semialgebraic topology, surface rational singularities, and higher-dimensional local Picard groups. Section~\ref{sec:local_estimates} proves the bounded degree local residual estimates, including the Noether normalization reduction from arbitrary prime localizations to closed points. Section~\ref{sec:universal_local} proves the universal local theorem. Section~\ref{sec:layer012} globalizes the local theorem, makes field descent, and proves Theorem~\ref{thm:main}.

\begin{rem}
The overall structure of the proof of this paper --- in particular the strategy of treating the surface case and the higher-dimensional case separately and assembling them into the local-to-global argument --- was originated by generative AI, particularly the Rethlas system, by putting together known ingredients. More precisely, the first prompt was generated by ChatGPT Pro 5.5, and the rest was handled by Rethlas. The first draft obtained in this way contained many errors. Most of them were minor and readily fixable by hand; however, the proof of Lemma~\ref{lem:semialg_torsion} was essentially wrong in the AI-generated draft and was corrected only later. A substantial amount of elaboration, verification, and rewriting was then carried out by hand to reach the present version. 

See \cite{Ju+26} for a detailed introduction to the Rethlas system. Due to the limitation of generative AI, it is possible that we have missed some related references in the literature, and we welcome any comments from experts.
\end{rem}

\subsection*{Acknowledgements}
The first author was partially supported by the National Key R\&D Program of China \#\allowbreak 2024YFA1014400. The authors would like to thank the Rethlas team, namely Haocheng Ju, Jiedong Jiang, Shurui Liu, Guoxiong Gao, Yuefeng Wang, Zeming Sun, Bin Wu, Liang Xiao, and Bin Dong, for their contributions to the development of Rethlas and its customized version used for the problem studied in this paper. The authors would like to thank Kaiyuan Gu for assistance with the verification of an earlier blueprint of this paper. The authors would like to thank Ruochuan Liu and Gang Tian for constant support and encouragement.

\section{Algebraic preliminaries and descent}\label{sec:prelim}

We adopt the standard notation and terminology for the minimal model program from~\cite{Sho92,KM98,BCHM10} and use them freely. In particular we refer the reader to \cite[Chapter 5]{KM98} for basic definitions and properties of rational singularities.

\begin{defn}[Divisorial module]\label{def:div_module}
Let $R$ be a normal noetherian domain with fraction field $K$. Let $D=\sum_{P\in E} a_P P$ be a Weil divisor on $\Spec R$, with finite support $E$ contained in the set of height one primes of $R$. The \emph{divisorial module} $\sO_R(D)$ is the fractional $R$-submodule of $K$ given by
\begin{align*}
\sO_R(D) &= \{ f\in K^\times \mid \divisor_R(f) + D \geq 0 \} \cup \{0\} \\
&= \bigcap_{P\in E} \pi_P^{-a_P} R_P \;\cap\; \bigcap_{P\notin E,\,\mathrm{ht}(P)=1} R_P,
\end{align*}
where $\pi_P$ is a uniformizer of $R_P$. It is a finite reflexive $R$-module of rank one. We write $\sO_R$ for $\sO_R(0)=R$. A Weil divisor $D$ on $\Spec R$ is Cartier at $\fp\in \Spec R$ if and only if $\sO_R(D)_\fp$ is a free $R_\fp$-module of rank one.
\end{defn}

\begin{defn}[Topological link]\label{def:link}
Let $x$ be a closed point of a complex algebraic variety $X$. Choose a closed embedding $X\hookrightarrow \bbC^M$ sending $x$ to the origin. For a sufficiently small positive real number $\epsilon$, the intersection of the underlying complex analytic set of $X$ with the Euclidean sphere of radius $\epsilon$ centered at the origin in $\bbC^M = \bbR^{2M}$ is a closed bounded real semialgebraic subset of $\bbR^{2M}$ whose homeomorphism type is independent of the choices of embedding and $\epsilon$ \cite{Har80,Ver76}. This homeomorphism type is the \emph{topological link} of the complex analytic germ $(x,X)$, denoted by $\link(x,X)$.
\end{defn}

\subsection{Flat base change and divisorial pullback}\label{sec:layer4a}

In this subsection, we prove the flat base change statements for reflexive and divisorial modules that will be used in the descent arguments.

\subsubsection{Flat preservation of reflexivity}

We begin with the elementary reflexivity statement for finite modules under flat base change.

\begin{lem}[Flat base change preserves reflexivity]\label{lem:flat_reflexive}
Let $R$ be a noetherian ring. Let $S$ be a noetherian flat $R$-algebra. Let $M$ be a finite $R$-module. Assume that $M$ is a reflexive $R$-module.
Then $S\otimes_R M$
is a reflexive $S$-module.
\end{lem}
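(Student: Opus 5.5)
The approach is to show that flat base change commutes with taking duals of finite modules over a noetherian ring, and then to transport the isomorphism $M \to M^{**}$ along $R \to S$.

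First, for any finite $R$-module $N$ and any flat $R$-algebra $S$, I will establish the natural isomorphism
\[
S\otimes_R \Hom_R(N,R) \xrightarrow{\ \sim\ } \Hom_S(S\otimes_R N, S).
\]
Since $R$ is noetherian, $N$ is finitely presented. Choose a presentation $R^a \to R^b \to N \to 0$. Applying $\Hom_R(-,R)$ gives the exact sequence $0\to N^* \to R^b \to R^a$. Tensoring with the flat module $S$ preserves exactness, so we get
\[
0\to S\otimes_R N^* \to S^b \to S^a .
\]
On the other hand, tensoring the presentation with $S$ yields a presentation $S^a\to S^b\to S\otimes_R N\to 0$. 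Dualizing over $S$ gives $0\to (S\otimes_R N)^{*}\to S^b\to S^a$ with the same maps. The five lemma, or simply comparison of kernels, then shows that the natural map is an isomorphism.

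Next, I apply this twice. The module $N=M^*$ is again finite over the noetherian ring $R$, so
\[
S\otimes_R M^{**}\cong \Hom_S(S\otimes_R M^*,S)\cong \Hom_S\bigl((S\otimes_R M)^*,S\bigr)=(S\otimes_R M)^{**}.
\]
I then check that under these identifications, the base change $S\otimes_R \theta_M$ of the canonical map $\theta_M: M\to M^{**}$ corresponds to the canonical map $\theta_{S\otimes_R M}$. This is a direct check on elementary tensors $s\otimes m$, which are sent to evaluation at $m$ scaled by $s$. Since $\theta_M$ is an isomorphism by reflexivity, so is its base change, and hence $\theta_{S\otimes_R M}$ is an isomorphism. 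The module $S\otimes_R M$ is also finite over $S$.

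The only point requiring care is the compatibility of the canonical maps with the identifications. This is routine naturality bookkeeping, so I expect no real obstacle. Note that the noetherian hypothesis on $S$ is not needed for this argument; it is only relevant if one's definition of reflexivity requires finiteness over a noetherian ring.
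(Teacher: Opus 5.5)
Your proposal is correct and follows essentially the same route as the paper. You identify $S\otimes_R\Hom_R(N,R)$ with $\Hom_S(S\otimes_R N,S)$ by dualizing a finite presentation and comparing kernels inside $S^b$, apply this to $M$ and to $M^{*}$, and then base change the isomorphism $M\to M^{**}$; your explicit check on elementary tensors that the canonical maps match is slightly more careful than the paper's treatment of that step.
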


\begin{proof}
Set $M^\vee \coloneqq \Hom_R(M,R)$. Since $R$ is noetherian and $M$ is finite, $M^\vee$ is a finite $R$-module.

Choose a finite presentation
\[
R^u \to R^v \to M \to 0
\]
with integers $u, v\geq 0$. Applying $\Hom_R(-,R)$ gives the exact sequence
\[
0 \to M^\vee \to R^v \to R^u.
\]
Tensor this exact sequence with $S$. Since $S$ is flat over $R$, the result is the exact sequence
\[
0 \to S\otimes_R M^\vee \to S^v \to S^u.
\]
The original presentation, after tensoring with $S$, is the finite presentation
\[
S^u \to S^v \to S\otimes_R M \to 0.
\]
Applying $\Hom_S(-,S)$ to this presentation gives the exact sequence
\[
0 \to \Hom_S(S\otimes_R M, S) \to S^v \to S^u.
\]
The two displayed kernels inside $S^v$ are the kernel of the same map $S^v\to S^u$. Thus
\[
\Hom_S(S\otimes_R M, S) \cong S\otimes_R M^\vee.
\]
Apply the same argument to the finite $R$-module $M^\vee$. It gives
\[
\Hom_S(S\otimes_R M^\vee, S) \cong S\otimes_R \Hom_R(M^\vee, R).
\]
By hypothesis, the natural map $M\to \Hom_R(M^\vee, R)$ is an isomorphism. Tensoring it with $S$ gives an isomorphism
\[
S\otimes_R M \to S\otimes_R \Hom_R(M^\vee, R).
\]
Combining with the preceding identification of $\Hom_S(S\otimes_R M^\vee, S)$, this is exactly the natural double dual map
\[
S\otimes_R M \to \Hom_S(\Hom_S(S\otimes_R M, S), S).
\]
Thus that natural map is an isomorphism.
\end{proof}

\subsubsection{Divisorial base change}

The following form of flat divisorial base change is the only one needed later. It includes the case where a codimension one point of the target lies over the generic point of the source.

\begin{lem}[Flat divisorial base change]\label{lem:generalized_flat_pullback}
Let $R$ and $S$ be normal noetherian domains. Let
\[
\phi\colon R\to S
\]
be an injective flat ring homomorphism. Let
\[
D=\sum_{P\in E}a_P P
\]
be a Weil divisor on $\Spec R$, where $E$ is a finite set of height one primes of $R$ and $a_P\in\bbZ$.

For every height one prime $Q$ of $S$, put $P_Q\coloneqq \phi^{-1}(Q)$. Then $P_Q$ is either the zero ideal or a height one prime of $R$. Define an integer $b_Q$ as follows. If $P_Q=0$, set $b_Q=0$. If $P_Q$ has height one, let $e_Q$ be the valuation in the discrete valuation ring $S_Q$ of the image under $\phi$ of a generator of the principal ideal $P_QR_{P_Q}$, and set
\[
b_Q\coloneqq a_{P_Q}e_Q,
\]
where $a_{P_Q}=0$ if $P_Q\notin E$. Then only finitely many $b_Q$ are nonzero. The Weil divisor
\[
D_S\coloneqq \sum_Q b_QQ
\]
on $\Spec S$ is the flat divisorial base change of $D$ to $\Spec S$.

For every integer $q\geq 1$, the flat divisorial base change of $qD$ is $qD_S$, and there is an isomorphism of $S$-modules
\[
S\otimes_R \sO_R(qD)\cong \sO_S(qD_S).
\]
\end{lem}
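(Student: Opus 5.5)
The plan is to verify the claims in order: first the structure of $P_Q$, then finiteness, then linearity in $q$, then the module isomorphism. The isomorphism is the substantive part.

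\emph{Contraction, finiteness, linearity.} Since $\phi$ is flat, going down holds. So $\mathrm{ht}(P_Q)\le \mathrm{ht}(Q)=1$, and $P_Q$ is either $0$ or a height one prime. Injectivity of $\phi$ makes $e_Q$ well defined and finite, since the generator maps to a nonzero element of $S_Q$. For finiteness, fix $P\in E$ and a nonzero $f\in P$. Any $Q$ with $P_Q=P$ contains $\phi(f)\neq 0$, and only finitely many height one primes of the noetherian normal domain $S$ contain a given nonzero element. Since $E$ is finite, only finitely many $b_Q$ are nonzero. Linearity in $q$ is immediate, because $b_Q$ is linear in the coefficients $a_P$.

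\emph{Setting up the isomorphism.} Write $M=\sO_R(qD)\subset K$ and $L=\Frac(S)$. Since $S$ is flat, $S\otimes_R M\to S\otimes_R K$ is injective. The ring $S\otimes_R K$ is a localization of $S$ at the image of $R\setminus\{0\}$, which consists of nonzero elements because $\phi$ is injective and $S$ is a domain. Hence $S\otimes_R K\subset L$, and $S\otimes_R M$ is identified with the $S$-submodule $SM\subset L$. By Lemma~\ref{lem:flat_reflexive}, $SM$ is a finite reflexive rank one $S$-module, so it is a divisorial fractional ideal. A reflexive fractional ideal of a normal noetherian domain is determined by its localizations at height one primes: it equals $\bigcap_Q (SM)_Q$. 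Likewise $\sO_S(qD_S)=\bigcap_Q \pi_Q^{-qb_Q}S_Q$. It therefore suffices to show $(SM)_Q=\pi_Q^{-qb_Q}S_Q$ for every height one $Q$.

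\emph{Local comparison.} Fix $Q$ and put $P=P_Q$. Then $S_Q$ is a localization of $S\otimes_R R_P$, because elements of $R\setminus P$ map outside $Q$. This gives $(SM)_Q=S_Q\cdot M_P$. If $P=0$, then $M_P=K$, and $S_Q K=S_Q$ since every nonzero element of $R$ is a unit in $S_Q$; this matches $b_Q=0$. If $P$ has height one, then $M_P=\pi_P^{-qa_P}R_P$, because the localization of a reflexive divisorial ideal at a height one prime is the corresponding DVR module. Hence $S_QM_P=\phi(\pi_P)^{-qa_P}S_Q=\pi_Q^{-qa_Pe_Q}S_Q$, which is exactly $\pi_Q^{-qb_Q}S_Q$.

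The main obstacle is the step that reduces the global isomorphism to height one localizations. It requires that $S\otimes_R M$ embed in $L$ as a reflexive fractional ideal, which uses flatness twice: once for injectivity into $L$, and once through Lemma~\ref{lem:flat_reflexive}. It also needs the standard fact that reflexive fractional ideals over a normal noetherian domain are intersections of their height one localizations. I expect the rest to be routine.
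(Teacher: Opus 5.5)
Your proposal is correct and follows essentially the same route as the paper. It uses going-down for the contraction and finiteness of minimal primes for the support. There you use the height one primes containing a single nonzero $\phi(f)$ where the paper uses the minimal primes over $PS$, which is the same idea. The isomorphism is obtained by comparing height one localizations of two reflexive rank one fractional ideals, via Lemma~\ref{lem:flat_reflexive} and the fact that such ideals are determined by their height one localizations. Your explicit flatness argument that $S\otimes_R M$ embeds in $\Frac(S)$ is a small welcome addition to what the paper states only as ``may be regarded as''.
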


\begin{proof}
Let $Q$ be a height one prime of $S$, and put $P\coloneqq \phi^{-1}(Q)$. Since $\phi$ is flat, going-down for flat ring maps shows that $P$ has height at most one. As $R$ is a domain, $P$ is either zero or a height one prime. This proves the contraction assertion.

We next prove the finiteness of the support of $D_S$. If $b_Q\neq 0$, then $P_Q$ belongs to the finite set $E$. Fix $P\in E$. We claim that every height one prime $Q$ of $S$ with $Q\cap R=P$ is a minimal prime of $PS$. Let $Q_0\subseteq Q$ be a minimal prime of $S$ over $PS$. Then $Q_0\cap R$ contains $P$ and is contained in $Q\cap R=P$, hence $Q_0\cap R=P$. Choose a nonzero element $r\in P$. Since $\phi$ is injective, the image of $r$ in $S$ is nonzero; since $r\in PS$, it lies in $Q_0$. Thus $Q_0$ is a nonzero prime of the domain $S$. As $Q_0\subseteq Q$ and $Q$ has height one, we get $Q_0=Q$. The noetherian ring $S$ has only finitely many minimal primes over $PS$. Since $E$ is finite, only finitely many $Q$ have $b_Q\neq 0$.

We now verify that the displayed coefficient formula is the flat pullback of \(D\) at each codimension one point of \(\Spec S\). Let \(Q\) be a height one prime of \(S\), and set \(P=Q\cap R\). If \(P=0\), then no height one component of \(D\) specializes to \(Q\), and the pullback coefficient at \(Q\) is zero. If \(P\) has height one, then \(R_P\) and \(S_Q\) are discrete valuation rings. A generator of the principal ideal \(P R_P\) has valuation \(1\) in \(R_P\), and its image in \(S_Q\) has valuation \(e_Q\). Therefore a component \(a_P P\) pulls back with coefficient \(a_Pe_Q\) at \(Q\). This is exactly the definition of \(b_Q\). The same valuation computation is linear in the coefficients, so the base change of \(qD\) is \(qD_S\).

It remains to identify the divisorial modules. We prove the assertion for a fixed $q\geq 1$. Let $K\coloneqq \Frac(R)$ and $L\coloneqq \Frac(S)$. The injective map $\phi$ identifies $K$ with a subfield of $L$. Since $\sO_R(qD)$ is a fractional $R$-submodule of $K$ and $S$ is flat over $R$, the module $S\otimes_R\sO_R(qD)$ may be regarded as the fractional $S$-submodule of $L$ generated by the image of $\sO_R(qD)$.

Localize at a height one prime $Q$ of $S$, and set $P\coloneqq Q\cap R$.

If $P=0$, then every nonzero element of $R$ maps to an element of $S\setminus Q$, hence to a unit of the discrete valuation ring $S_Q$. Since $\sO_R(qD)\otimes_RK=K$, we obtain
\[
(S\otimes_R\sO_R(qD))_Q=S_Q.
\]
The coefficient of $Q$ in $qD_S$ is zero, so $\sO_S(qD_S)_Q=S_Q$.

If $P$ has height one, then $R_P$ and $S_Q$ are discrete valuation rings. Let $\pi_P$ be a generator of $PR_P$. The coefficient of $P$ in $qD$ is $qa_P$, with $a_P=0$ if $P\notin E$. Therefore
\[
\sO_R(qD)_P=\pi_P^{-qa_P}R_P,
\]
and hence
\[
(S\otimes_R\sO_R(qD))_Q
=S_Q\otimes_{R_P}\pi_P^{-qa_P}R_P
=\phi(\pi_P)^{-qa_P}S_Q.
\]
By definition of $e_Q$, the element $\phi(\pi_P)$ has valuation $e_Q$ in $S_Q$. Thus the last displayed module equals $\sO_S(qD_S)_Q$.

We have shown that the two fractional $S$-modules $S\otimes_R\sO_R(qD)$ and $\sO_S(qD_S)$ have the same localization at every height one prime of $S$. The module $\sO_S(qD_S)$ is reflexive because $S$ is normal and noetherian. The module $\sO_R(qD)$ is finite and reflexive over $R$, so Lemma~\ref{lem:flat_reflexive} shows that $S\otimes_R\sO_R(qD)$ is reflexive over $S$. Over a normal noetherian domain, reflexive fractional rank one modules are determined by their localizations at the height one primes \cite[Chapter~VII, \S~4, Theorem~4]{Bou89}. Therefore
\[
S\otimes_R\sO_R(qD)=\sO_S(qD_S)
\]
as fractional $S$-submodules of $L$, which gives the asserted isomorphism.
\end{proof}

\subsection{Faithfully flat descent of Cartierness}\label{sec:layer4b}

In this subsection, we prove the faithfully flat descent statements that turn Cartier index bounds after base change into Cartier index bounds before base change.

\begin{lem}[Faithfully flat local base change detects Cartierness]\label{lem:ff_detects_cartier}
Let $R\to S$ be a faithfully flat local homomorphism of noetherian local rings. Let $M$ be a finite $R$-module. Assume that $S\otimes_R M$ is a free $S$-module of rank one. Then $M$ is a free $R$-module of rank one.

Consequently, let $R$ be a normal noetherian local domain, let $D$ be a Weil divisor on $\Spec R$, and let $q$ be a positive integer. If $S\otimes_R \sO_R(qD)$ is a free $S$-module of rank one, then $qD$ is Cartier on $\Spec R$.
\end{lem}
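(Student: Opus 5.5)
The first assertion comes from two standard facts: faithfully flat descent of flatness, and the fact that over a noetherian local ring a finite flat module is free. The rank is then read off after base change to the residue fields.

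Let $\fm$ and $\fn$ be the maximal ideals of $R$ and $S$, and let $\kappa=R/\fm$ and $\lambda=S/\fn$.

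We first show that $M$ is flat over $R$. Let $N'\to N$ be an injection of $R$-modules and let $K$ be the kernel of $M\otimes_R N'\to M\otimes_R N$. Since $S$ is flat over $R$, the module $S\otimes_R K$ is the kernel of
\[
(S\otimes_R M)\otimes_S (S\otimes_R N')\to (S\otimes_R M)\otimes_S (S\otimes_R N).
\]
Flatness of $S$ also makes $S\otimes_R N'\to S\otimes_R N$ injective. Because $S\otimes_R M$ is free, hence flat over $S$, the displayed map is injective, so $S\otimes_R K=0$. Faithful flatness now gives $K=0$. Hence $M$ is a finite flat module over the noetherian local ring $R$, and therefore $M\cong R^r$ for some $r\geq 0$. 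For this last step one can cite the standard result, or argue by Nakayama's lemma: a minimal presentation has kernel $T$ with $T\otimes\kappa=0$ by flatness, and $T$ is finite, so $T=0$.

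Next we compute the rank. We have $S\otimes_R M\cong S^r$, and by hypothesis this module is free of rank one. Reducing modulo $\fn$ gives $\lambda^r\cong\lambda$, so $r=1$. This is where we use that $R\to S$ is local: the map sends $\fm$ into $\fn$, so $\lambda$ is a $\kappa$-algebra and $S\otimes_R M$ reduces compatibly modulo $\fn$. In any case, ranks of free modules over the nonzero commutative ring $S$ are well defined, so $S^r\cong S$ forces $r=1$ directly.

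For the consequence, apply the first part to $M=\sO_R(qD)$. This is a finite $R$-module by Definition~\ref{def:div_module}. The first part shows that $\sO_R(qD)$ is free of rank one over $R$. By Definition~\ref{def:div_module}, $qD$ is then Cartier at the closed point. Since localization of a free module stays free, $qD$ is Cartier at every prime of the local scheme $\Spec R$, that is, $qD$ is Cartier on $\Spec R$.

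The only step with real content is the descent of flatness together with the passage from flat to free. Both are standard, and I expect no genuine obstacle.
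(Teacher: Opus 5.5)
Your proof is correct, but it takes a different route from the paper's.

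The paper never shows that $M$ is flat. It uses that $R\to S$ is local to identify $(S/\fn)\otimes_{R/\fm}(M/\fm M)$ with $(S\otimes_R M)/\fn(S\otimes_R M)$, which gives $\dim_{R/\fm} M/\fm M=1$. Nakayama's lemma then gives a surjection $\alpha\colon R\to M$. Its base change $S\to S\otimes_R M$ is a surjection onto a free rank one module, hence an isomorphism. So $S\otimes_R\ker\alpha=0$, and faithful flatness kills $\ker\alpha$.

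You instead descend flatness along $R\to S$ and quote that finite flat modules over a noetherian local ring are free. You then read off the rank from $S^r\cong S$. In your minimal presentation the kernel dies by Tor-vanishing over $R$; in the paper's it dies by faithful flatness after base change.

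The paper's argument is a single self-contained Nakayama computation tailored to rank one. Yours is more modular and slightly more general. It uses only faithful flatness of $R\to S$, not that the map is local, as you yourself observe. It works for free modules of any finite rank. It also isolates the two standard facts behind the statement: descent of flatness, and flat implies free over a local ring.

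The deduction for $\sO_R(qD)$ is the same in both.
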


\begin{proof}
Let $\fm$ be the maximal ideal of $R$, and let $\fn$ be the maximal ideal of $S$. Since $R\to S$ is local, $\fm S \subset \fn$. Since $S\otimes_R M$ is a free $S$-module of rank one, the vector space
\[
(S\otimes_R M)/\fn(S\otimes_R M)
\]
has dimension one over $S/\fn$. The natural isomorphism
\[
(S/\fn)\otimes_{R/\fm}(M/\fm M) \cong (S\otimes_R M)/\fn(S\otimes_R M)
\]
therefore implies that $M/\fm M$ has dimension one over $R/\fm$.

Choose $u\in M$ whose image spans $M/\fm M$. By Nakayama's lemma, the $R$-linear map
\[
\alpha\colon R\to M, \qquad \alpha(1)=u
\]
is surjective. After tensoring with $S$, the map
\[
S\otimes_R \alpha\colon S \to S\otimes_R M
\]
is a surjection from $S$ onto a free rank one $S$-module, hence is multiplication by an element of that free rank one module that generates it, hence by a unit. Thus $S\otimes_R \alpha$ is an isomorphism.

Let $K\coloneqq \ker(\alpha)$. The exact sequence
\[
0 \to K \to R \xrightarrow{\alpha} M \to 0
\]
remains exact after tensoring with $S$, because $S$ is flat over $R$. Since $S\otimes_R \alpha$ is an isomorphism, $S\otimes_R K=0$. Since $S$ is faithfully flat over $R$, $K=0$. Thus $\alpha$ is an isomorphism, and $M$ is a free $R$-module of rank one.

For the consequence, the divisorial module $\sO_R(qD)$ is finite because $R$ is noetherian and normal. Apply the first part to $M=\sO_R(qD)$ to conclude that $\sO_R(qD)$ is free of rank one. For a Weil divisor on a normal local domain, this is equivalent to $qD$ being Cartier on $\Spec R$.
\end{proof}

\subsection{Field base change for rational singularities}\label{sec:layer4c}

This subsection records the behavior of rational singularities under field extension.

\begin{lem}[Field base change preserves normal rational local singularities]\label{lem:alg_field_rational}
Let $K$ be a field of characteristic $0$. Let $L$ be a field extension of $K$. Let $A$ be a finitely generated $K$-domain. Set $A_L\coloneqq L\otimes_K A$. Let $\fq$ be a prime ideal of $A_L$, and let $\fp$ be the contraction of $\fq$ to $A$. Set
\[
R\coloneqq A_\fp,
\qquad
S\coloneqq (A_L)_\fq.
\]
Assume that $R$ is a normal local domain and that $R$ has a rational singularity. Then $S$ is a normal local domain and $S$ has a rational singularity. Moreover
\[
\dim S=\dim R+\dim\bigl(S\otimes_R\kappa(\fp)\bigr).
\]
In particular, if $L/K$ is algebraic, then $\dim S=\dim R$.
\end{lem}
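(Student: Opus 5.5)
The argument splits into three parts: flatness of $R\to S$, transfer of normality and rationality along that flat map, and the dimension formula.

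\emph{Flatness.} The map $A\to A_L=L\otimes_K A$ is flat because $L$ is flat over $K$. It follows that $R=A_\fp\to S=(A_L)_\fq$ is a flat local homomorphism of noetherian local rings, and therefore faithfully flat. The ring $S$ is noetherian because $A_L$ is a finitely generated $L$-algebra.

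\emph{Dimension formula.} This is the standard dimension formula for a flat local homomorphism, $\dim S=\dim R+\dim S/\fp S$. Here $S/\fp S=S\otimes_R\kappa(\fp)$. When $L/K$ is algebraic, $A\to A_L$ is integral, so the fiber ring $\kappa(\fp)\otimes_K L$ is integral over the field $\kappa(\fp)$ and hence zero-dimensional. Its localization $S\otimes_R\kappa(\fp)$ is then zero-dimensional as well, which gives $\dim S=\dim R$.

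\emph{Normality and rationality.} The closed fiber $S\otimes_R\kappa(\fp)$ is a localization of $\kappa(\fp)\otimes_K L$. Since $\charac K=0$, the extension $L/K$ is separable, so $\kappa(\fp)\otimes_K L$ is geometrically regular over $\kappa(\fp)$, and so are its localizations. More generally, every fiber of $A\to A_L$ at a prime $\fp'$ is a localization of $\kappa(\fp')\otimes_K L$ and is regular for the same reason. Thus $R\to S$ is a flat local homomorphism with geometrically regular fibers. By the permanence of normality under such maps (Serre's criterion $R_1+S_2$ combined with the fiber condition; see Matsumura, Theorem~23.9), $S$ is normal, and being local it is a domain.

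For rationality, choose a resolution $Y\to\Spec A$, shrinking $\Spec A$ to an open neighbourhood of $\fp$ on which $A$ is normal with rational singularities; this is possible since rationality is an open condition and holds at $\fp$. Base change gives $Y_L\to\Spec A_L$. Here $Y_L$ is regular, because $Y_L\to Y$ is flat with regular fibers in characteristic $0$, and the map is proper and birational. It may not be birational globally if $A_L$ is not a domain, but it is birational over the component through $\fq$, which is all that is needed after localizing. Flat base change of cohomology gives $R^if_{L*}\sO_{Y_L}=L\otimes_K R^if_*\sO_Y$, which vanishes for $i>0$ near $\fp$, and $f_{L*}\sO_{Y_L}=\sO_{A_L}$. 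Localizing at $\fq$ shows that $S$ has a rational singularity.

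The main point requiring care is that $A_L$ need not be a domain when $K$ is not algebraically closed in $\Frac A$. So the argument should be run with $\Spec S$ local, where normality of $S$ makes it a domain. This makes $Y_L\times_{A_L}\Spec S\to\Spec S$ a genuine resolution: it is proper, $Y_L$ is regular, and it is an isomorphism over the dense open set where $Y\to\Spec A$ was an isomorphism.
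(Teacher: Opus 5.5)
Your proof is correct. It differs from the paper's mainly in the normality step, with a smaller difference in how the rationality step is set up.

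\textbf{Normality.} The paper argues globally. Since $A$ is excellent, its normal locus is open, so $A_f$ is normal for some $f\notin\fp$. A normal finite type algebra over the perfect field $K$ is geometrically normal, so $L\otimes_K A_f$ is normal, and $S$, a localization of it, is normal. You instead argue locally, by ascent along the flat local map $R\to S$ (Matsumura, Thm.~23.9). This obliges you to check that every fibre $S\otimes_R\kappa(\fp')$ is normal. That holds because $\kappa(\fp')$ is a finitely generated, hence separably generated, extension of $K$ in characteristic $0$. So $\kappa(\fp')$ is a localization of a smooth $K$-algebra, and $\kappa(\fp')\otimes_K L$ is a localization of a smooth $L$-algebra, hence regular. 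Your route applies to any flat local map with normal fibres, while the paper's reduces to a single citation.

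\textbf{Rationality.} Both arguments base change a resolution along a flat map and use flat base change for the higher direct images of $\sO$. The paper does this directly for a resolution of $\Spec R$. It gets birationality from injectivity of $R\to S$: the generic point of $\Spec S$ maps to that of $\Spec R$. You first spread the resolution out over a neighbourhood of $\fp$. This costs you two extra inputs that the paper avoids by base changing the resolution of $\Spec R$ itself: openness of the rational locus, and the discussion of $A_L$ possibly not being a domain.

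\textbf{Dimension.} The dimension formula and the algebraic case are handled identically.
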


\begin{proof}
We first record the dimension computation. The rings $R$ and $S$ are noetherian local rings: indeed $A$ is a finitely generated algebra over the field $K$, and $A_L$ is a finitely generated algebra over the field $L$. The homomorphism
\[
R=A_\fp\longrightarrow (A_L)_\fq=S
\]
is local. It is flat because $K\to L$ is flat and flatness is preserved by base change and localization. Thus the hypotheses of the dimension formula for flat local homomorphisms of noetherian local rings are satisfied. Applying that formula gives
\[
\dim S=\dim R+
\dim\bigl(S/\fp S\bigr)
=
\dim R+
\dim\bigl(S\otimes_R\kappa(\fp)\bigr).
\]
If $L/K$ is algebraic, then $S\otimes_R\kappa(\fp)$ is a localization of the algebraic $\kappa(\fp)$-algebra $L\otimes_K\kappa(\fp)$. Such a ring has Krull dimension $0$. Hence, in the algebraic case,
\[
\dim S=\dim R.
\]

We next prove normality. Since $A$ is of finite type over a field, it is excellent; in particular its normal locus, the complement of $V(\text{conductor})$, is open. The point $\fp$ belongs to the normal locus because $A_\fp=R$ is normal. Hence there is an element $f\in A\setminus\fp$ such that $A_f$ is normal. Since $f\notin\fq$, the local ring $S$ is a localization of $L\otimes_K A_f$.

The field $K$ is perfect because $\charac(K)=0$ \cite[Tag~030Z]{Stacks}. A normal finite type algebra over a perfect field is geometrically normal \cite[Tag~037Z]{Stacks}. Therefore $L\otimes_K A_f$ is normal. Localizing at $\fq$ shows that $S$ is normal. Since $S$ is local and normal, $S$ is a domain.

It remains to prove that $S$ has a rational singularity. Put $X\coloneqq\Spec R$ and $Y\coloneqq\Spec S$. Since $R$ has a rational singularity, there is a proper birational morphism
\[
\mu\colon Z\to X
\]
with $Z$ regular and $R^i\mu_*\sO_Z=0$ for every $i>0$. Form the base change
\[
\mu_S\colon Z_S\coloneqq Z\times_XY\to Y.
\]
The morphism $\mu_S$ is proper. The map $R\to S$ is injective: multiplication by any nonzero element of the domain $R$ remains injective after the flat base change to $A_L$ and after the localization at $\fq$. Hence the generic point of $Y$ maps to the generic point of $X$. If $U\subset X$ is a dense open subset over which $\mu$ is an isomorphism, then $U\times_XY$ is a dense open subset of $Y$, and $\mu_S$ is an isomorphism over it. Thus $\mu_S$ is birational.

The scheme $Z_S$ is regular. Indeed, $Z$ is regular and essentially of finite type over the perfect field $K$; hence $Z$ is geometrically regular over $K$. Since every field extension in characteristic $0$ is separable, regularity is preserved after the field extension $K\subset L$ and after localization.

For every integer $i>0$, flat base change for higher direct images of quasi-coherent sheaves \cite[Cohomology of Schemes, Lemma~30.5.3, Tag~02KH]{Stacks} gives
\[
R^i(\mu_S)_*\sO_{Z_S}\cong S\otimes_R R^i\mu_*\sO_Z=0.
\]
Thus $\mu_S$ is a resolution of $Y$ with vanishing higher direct images of the structure sheaf. Therefore $S$ has a rational singularity.
\end{proof}

\section{Topological input for local Cartier indices}\label{sec:layer4d}

This section supplies the topological input used to control local Cartier indices. It begins with a uniform semialgebraic torsion bound, then treats rational surface singularities, and finally records the higher-dimensional Chern class input.

\subsection{Bounded semialgebraic cohomology torsion}

We need the following result from real algebraic geometry. The bound of the total Cartier indices in Theorem \ref{thm:main} essentially come from the bound $C(k,s,d)$ in the following lemma.

\begin{lem}[Bounded semialgebraic cohomology torsion]
\label{lem:semialg_torsion}
Let \(k,s,d\), and \(q\) be integers with \(k\ge 0\), \(s\ge 1\), \(d\ge 1\),
and \(q\ge 0\). There exists an integer \(M\ge 1\), depending only on
\(k,s\), and \(d\), with the following property.

Let \(\mathcal P\) be a finite set of at most \(s\) real polynomials in
\(\mathbb R[x_1,\ldots,x_k]\), each of total degree at most \(d\). Let
\(T\subset \mathbb R^k\) be a closed and bounded \(\mathcal P\)-semialgebraic
set, meaning that \(T\) is defined by a quantifier-free Boolean formula whose
atomic predicates are
\[
P(x)=0,\qquad P(x)>0,\qquad P(x)<0
\]
for polynomials \(P\in\mathcal P\). Then the torsion subgroup of
\(H^q(T,\mathbb Z)\) has order at most \(M\) and exponent at most \(M\).
\end{lem}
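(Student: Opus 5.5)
Note first that \(q\) does not appear among the quantities \(M\) may depend on. For \(q>k\) we have \(H^q(T,\bbZ)=0\), since \(T\) is homeomorphic to a finite simplicial complex of dimension at most \(k\). So there are only finitely many relevant \(q\), and one \(M\) can be taken as the maximum over \(0\le q\le k\).

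The plan is to triangulate \(T\) uniformly: produce a finite simplicial complex \(K\) with \(|K|\cong T\), whose number of simplices is bounded by a function \(N(k,s,d)\). Once this is in hand, the conclusion is elementary homological algebra. \(H^q(T,\bbZ)\) is the cohomology of the simplicial cochain complex \(C^\bullet(K;\bbZ)\). Each \(C^q\) is free of rank at most \(N\), and each coboundary matrix has entries in \(\{0,\pm1\}\). The torsion of \(H^q\) equals the torsion of \(\operatorname{coker}\bigl(\delta^{q-1}\colon C^{q-1}\to \ker\delta^q\bigr)\). Since \(C^q/\ker\delta^q\) is torsion-free, this is also the torsion of \(\operatorname{coker}(\delta^{q-1}\colon C^{q-1}\to C^q)\). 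By Smith normal form, that torsion has order equal to the product of the nontrivial invariant factors, which is a gcd of maximal nonzero minors of \(\delta^{q-1}\). Hadamard's inequality bounds such a minor by \((\sqrt N)^N\). Therefore both the order and the exponent are at most \(M\coloneqq N^{N/2}\).

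The main step, and the main obstacle, is the uniform triangulation with a bounded number of simplices. A plain existence statement for semialgebraic triangulations is not enough; we need complexity control. I would use the effective cylindrical algebraic decomposition (CAD) from \cite{BPR06}. For a family \(\mathcal P\) of \(s\) polynomials of degree at most \(d\) in \(k\) variables, there is a CAD adapted to \(\mathcal P\) with at most \((sd)^{O(1)^k}\) cells. After a generic linear change of coordinates, this CAD can be refined to a triangulation of any closed bounded \(\mathcal P\)-semialgebraic set. The refinement goes through the standard proof of the semialgebraic triangulation theorem, by induction on \(k\) via projection. In that proof the number of simplices is bounded in terms of the number of cells and of \(k\) alone, and \cite{BPR06} states the triangulation with such effective bounds. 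The point to check carefully is that the Boolean formula defining \(T\) plays no role. \(T\) is a union of cells of a \(\mathcal P\)-adapted decomposition, so it is a subcomplex of the triangulation of the bounded part, and its complexity is bounded independently of the formula.

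If the effective triangulation proves awkward to cite, I would use a fallback that bounds the groups directly. Via Mayer--Vietoris, or a spectral sequence over the cover by the closed sets \(\{P\ge0\}\), \(\{P\le0\}\), \(\{P=0\}\), \(T\) is expressed through sign-condition realizations. The Basu--Pollack--Roy bounds then control the Betti numbers, but they give no control on torsion. So the finite-complex argument above is necessary for torsion, and I would commit to the triangulation route.
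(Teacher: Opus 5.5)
Your proposal is correct, and its homological half is the same as the paper's. That half consists of identifying the torsion of $H^q$ with the torsion of $\operatorname{coker}\delta^{q-1}$, then Smith normal form, then Hadamard's inequality for $\{0,\pm1\}$-minors. It gives a bound independent of $q$, so your preliminary reduction to $q\le k$ is harmless but unnecessary.

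Where you genuinely differ is in how you obtain a triangulation of uniformly bounded size.

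The paper's route is soft. It parametrizes all $s$-tuples of polynomials of degree at most $d$ by their coefficient vector $a\in\mathbb R^N$. It then applies Hardt's semialgebraic triviality \cite[Theorem~5.46]{BPR06} to the projection $\mathbb R^N\times\mathbb R^k\to\mathbb R^N$, compatibly with the finitely many universal sign-condition sets $S_\sigma$. This shows the sets $T$ fall into finitely many homeomorphism types, one for each stratum $B_\alpha$ and each set $\Sigma$ of allowed sign vectors. Choosing one triangulation per type gives $C(k,s,d)$ with no counting at all.

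Your route is effective: a CAD with $(sd)^{O(1)^k}$ cells, followed by a simplex count through the inductive proof of the triangulation theorem. Its advantage is an explicit bound on the number of simplices, and hence on $M$. Its cost is that the bookkeeping is yours to do. The triangulation theorem in \cite{BPR06} is stated as a pure existence result. The count is also not literally a function of the cell number of a single CAD unless the good-position (quasi-monic) coordinate change is arranged at every level of the projection. Only then is each level's decomposition adapted to a projection family whose size and degrees are bounded in terms of $(k,s,d)$. Over each base simplex, the lifted simplices are then bounded in terms of $k$ and the number of sections over it.

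With that done, your observation that the Boolean formula plays no role is right. $T$ is a union of $\mathcal P$-sign-invariant cells, and you triangulate $T$ itself compatibly with them. "Subcomplex of the triangulation of the bounded part" is not quite the right phrasing for this.

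In short, the paper trades effectivity for brevity and needs only standard existence theorems. Your argument, once the level-by-level count is written out, gives an explicit constant.
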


\begin{proof}
We first prove a uniform bound on the size of a triangulation. The algebraic zero set case is precisely the finite topological types theorem, namely \cite[Theorem~5.47]{BPR06}. We need the same conclusion for Boolean combinations of sign conditions, and we give the reduction to semialgebraic triviality \cite[Theorem~5.46]{BPR06}.

Let
\[
D_{k,d}:=\binom{k+d}{d}.
\]
A real polynomial in \(k\) variables of degree at most \(d\) is determined by
\(D_{k,d}\) coefficients. Thus ordered \(s\)-tuples of such polynomials are
parametrized by
\[
\mathbb R^N,\qquad N=sD_{k,d}.
\]
If fewer than \(s\) polynomials are used, we pad the list with zero
polynomials.

Write the universal polynomials as
\[
P_i(a;x)=\sum_{|\alpha|\le d} a_{i,\alpha}x^\alpha,
\qquad i=1,\ldots,s,
\]
where \(a=(a_{i,\alpha})\in \mathbb R^N\) is the coefficient parameter and
\(x=(x_1,\ldots,x_k)\in \mathbb R^k\).

For each sign vector
\[
\sigma=(\sigma_1,\ldots,\sigma_s)\in \{-1,0,+1\}^s,
\]
define
\[
S_\sigma
=
\left\{
(a,x)\in \mathbb R^N\times \mathbb R^k :
\operatorname{sign} P_i(a;x)=\sigma_i \text{ for all } i=1,\ldots,s
\right\}.
\]
Each \(S_\sigma\) is a semialgebraic subset of
\(\mathbb R^N\times \mathbb R^k\), and there are only finitely many such
subsets.

Consider the projection
\[
\pi:\mathbb R^N\times \mathbb R^k\longrightarrow \mathbb R^N.
\]
Apply \cite[Theorem 5.46]{BPR06} to \(\pi\) and to the finite family of
semialgebraic subsets \(S_\sigma\). We obtain a finite semialgebraic partition
\[
\mathbb R^N=\bigsqcup_{\alpha\in A} B_\alpha
\]
such that \(\pi\) is semialgebraically trivial over each \(B_\alpha\), and the
trivialization is compatible with every \(S_\sigma\). More explicitly, for
each \(\alpha\) there is a semialgebraic set \(F_\alpha\), semialgebraic
subsets \(F_{\alpha,\sigma}\subset F_\alpha\), and a semialgebraic
homeomorphism
\[
\theta_\alpha:B_\alpha\times F_\alpha
\xrightarrow{\sim}
\pi^{-1}(B_\alpha)
\]
such that \(\pi\circ\theta_\alpha\) is the projection
\(B_\alpha\times F_\alpha\to B_\alpha\), and
\[
\theta_\alpha(B_\alpha\times F_{\alpha,\sigma})
=
S_\sigma\cap \pi^{-1}(B_\alpha)
\]
for every sign vector \(\sigma\).

It follows that if \(a,a'\in B_\alpha\), then the induced homeomorphism between
fibers
\[
h_{a,a'}:
\{a\}\times\mathbb R^k
\xrightarrow{\sim}
\{a'\}\times\mathbb R^k
\]
sends the \(\sigma\)-sign condition set over \(a\) to the corresponding
\(\sigma\)-sign condition set over \(a'\), for every
\(\sigma\in\{-1,0,+1\}^s\).

Now a quantifier-free Boolean formula in the signs of \(P_1,\ldots,P_s\) is
equivalent to choosing a subset
\[
\Sigma\subset \{-1,0,+1\}^s
\]
of allowed sign vectors. The corresponding fiber over \(a\in\mathbb R^N\) is
\[
T_a^\Sigma
=
\bigcup_{\sigma\in\Sigma}
\left\{
x\in\mathbb R^k :
\operatorname{sign} P_i(a;x)=\sigma_i \text{ for all } i=1,\ldots,s
\right\}.
\]
The compatibility above implies that, for fixed \(\alpha\) and \(\Sigma\), all
sets \(T_a^\Sigma\) with \(a\in B_\alpha\) are semialgebraically homeomorphic.

There are only finitely many strata \(B_\alpha\), and only finitely many
subsets \(\Sigma\subset\{-1,0,+1\}^s\). Therefore the semialgebraic sets
defined by Boolean formulas in at most \(s\) polynomials of degree at most
\(d\) occur in only finitely many semialgebraic homeomorphism types.

Now restrict to those fibers which are closed and bounded. For every pair
\((\alpha,\Sigma)\) for which there exists at least one closed and bounded
fiber \(T_a^\Sigma\) with \(a\in B_\alpha\), choose one such fiber and denote it
by
\[
T_{\alpha,\Sigma}.
\]
There are only finitely many such representatives.

Each \(T_{\alpha,\Sigma}\) is a closed and bounded semialgebraic set. By the
semialgebraic triangulation theorem, it admits a finite semialgebraic
triangulation; see \cite[Chapter~5]{BPR06}. Choose one such triangulation
\[
|K_{\alpha,\Sigma}|\xrightarrow{\sim} T_{\alpha,\Sigma}.
\]
Define
\[
C=C(k,s,d):=
\max\left(
1,\,
\max_{\alpha,\Sigma}
\#\{\text{simplices of }K_{\alpha,\Sigma}\}
\right),
\]
where the maximum is taken over the finitely many chosen representatives.

We claim that every closed and bounded semialgebraic set \(T\subset\mathbb R^k\)
defined by at most \(s\) polynomials of degree at most \(d\) admits a finite
semialgebraic triangulation with at most \(C\) simplices. Indeed, such a set is
some \(T_a^\Sigma\), with \(a\in B_\alpha\). Since \(T_a^\Sigma\) is closed and
bounded, the pair \((\alpha,\Sigma)\) was included above. By the preceding
triviality statement, \(T_a^\Sigma\) is semialgebraically homeomorphic to the
chosen representative \(T_{\alpha,\Sigma}\). Pulling back the chosen
triangulation of \(T_{\alpha,\Sigma}\) gives a semialgebraic triangulation of
\(T_a^\Sigma\) with at most \(C\) simplices.

We now use this uniform triangulation bound to control torsion in cohomology.
Choose a triangulation
\[
|K|\xrightarrow{\sim} T
\]
with at most \(C\) simplices. Then
\[
H^q(T,\mathbb Z)\cong H^q(K,\mathbb Z),
\]
where the right-hand side denotes simplicial cohomology.

Choose orientations of all simplices of \(K\). The simplicial cochain complex is
\[
C^{q-1}(K,\mathbb Z)
\xrightarrow{\delta^{q-1}}
C^q(K,\mathbb Z)
\xrightarrow{\delta^q}
C^{q+1}(K,\mathbb Z).
\]
Each group \(C^i(K,\mathbb Z)\) is free abelian of rank at most \(C\). With
respect to the oriented simplex bases, the matrices of the coboundary maps have
entries in \(\{-1,0,1\}\).

Let \(A\) be the matrix of
\[
\delta^{q-1}:C^{q-1}(K,\mathbb Z)\to C^q(K,\mathbb Z).
\]
There is an exact sequence
\[
0
\longrightarrow
H^q(K,\mathbb Z)
\longrightarrow
\operatorname{coker}(\delta^{q-1})
\longrightarrow
\operatorname{im}(\delta^q)
\longrightarrow
0.
\]
Since \(\operatorname{im}(\delta^q)\) is a subgroup of a free abelian group, it
is free abelian. In particular, it is torsion-free. Therefore
\[
H^q(K,\mathbb Z)_{\mathrm{tors}}
\cong
\operatorname{coker}(A)_{\mathrm{tors}}.
\]

Write
\[
A:\mathbb Z^m\to \mathbb Z^n,
\qquad m,n\le C,
\]
and let
\[
r=\operatorname{rank}A.
\]
If \(r=0\), then \(\operatorname{coker}(A)\) is free abelian, so there is no
torsion. Thus assume \(r>0\).

By Smith normal form,
\[
\operatorname{coker}(A)
\cong
\mathbb Z^{n-r}
\oplus
\bigoplus_{j=1}^r \mathbb Z/d_j\mathbb Z,
\]
where
\[
d_j>0,\qquad d_j\mid d_{j+1}.
\]
Thus
\[
\#\operatorname{coker}(A)_{\mathrm{tors}}
=
d_1\cdots d_r,
\]
and
\[
\exp \operatorname{coker}(A)_{\mathrm{tors}}
=
d_r
\le d_1\cdots d_r.
\]

The product \(d_1\cdots d_r\) is the greatest common divisor of the nonzero
\(r\times r\) minors of \(A\). In particular, it divides every nonzero
\(r\times r\) minor of \(A\).

Choose a nonzero \(r\times r\) minor \(B\) of \(A\). All entries of \(B\) lie in
\(\{-1,0,1\}\). Therefore each row of \(B\) has Euclidean norm at most
\(\sqrt r\). By Hadamard's inequality,
\[
|\det B|
\le
(\sqrt r)^r
=
r^{r/2}.
\]
Since \(r\le C\), we have
\[
|\det B|\le C^C.
\]
Because \(d_1\cdots d_r\) divides \(\det B\), it follows that
\[
d_1\cdots d_r\le C^C.
\]
Consequently,
\[
\#H^q(K,\mathbb Z)_{\mathrm{tors}}
\le C^C,
\qquad
\exp H^q(K,\mathbb Z)_{\mathrm{tors}}
\le C^C.
\]
Using \(H^q(T,\mathbb Z)\cong H^q(K,\mathbb Z)\), the same bounds hold for
\(T\).

Thus the lemma holds with
\[
M:=C(k,s,d)^{C(k,s,d)}.
\]
This integer depends only on \(k,s\), and \(d\), and is independent of \(q\).
\end{proof}

\subsection{Local Picard groups and link torsion}\label{sec:local_picard_link}

We now record the local Picard input which converts torsion in the link into a bound for the residual Cartier index. The surface case uses Brieskorn's class group computation, and the higher-dimensional case uses Koll{\'a}r's exponential sequence argument.

\begin{lem}[Chern class injection for the punctured local germ]
\label{lem:chern_injection}
Let \(R\) be the local ring at a closed point \(x\) of a normal complex algebraic variety.
Assume that \(R\) has a rational singularity and that
\[
\dim R=n\ge 3.
\]
Let
\[
U:=\Spec R\setminus\{x\}.
\]
Let \(M\) be the topological link of the complex analytic germ \((x,\Spec R)\). Then the first Chern class map gives an injection
\[
c_1:\Pic(U)\hookrightarrow H^2(M,\bbZ).
\]
\end{lem}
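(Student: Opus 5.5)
The plan is to follow Koll\'ar's exponential sequence argument \cite{Kol16}, comparing the algebraic punctured spectrum \(U\) with a small punctured analytic neighbourhood of \(x\). First choose a representative: an affine variety \(X\) with \(x\in X\), local ring \(R\), rational singularities, and \(x\) the only point where we care about the germ. Let \(V\) be a small contractible Stein neighbourhood of \(x\) in \(X^{an}\) (the intersection with a small ball), and put \(V^*=V\setminus\{x\}\). Then \(V^*\) deformation retracts onto the link \(M\) (conic structure theorem), so \(H^i(V^*,\bbZ)\cong H^i(M,\bbZ)\). The map \(c_1\) is defined as the composite
\[
\Pic(U)\to \Pic(V^{*,an})\to H^2(V^*,\bbZ)\cong H^2(M,\bbZ),
\]
where the first arrow extends a line bundle on \(U\) to one on \(X\setminus\{x\}\) after shrinking \(X\) (line bundles on \(U\) spread out to a neighbourhood, and by normality and \(\dim R\ge 2\) the extension across other points of codimension \(\geq 2\) can be arranged), then analytifies and restricts.

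Injectivity splits into two steps. The first step is that \(\Pic(U)\to\Pic^{an}(V^*)\) is injective. If \(L\) on \(U\) becomes analytically trivial on \(V^*\), then the pushforward \(j_*L\) is a reflexive rank one module, i.e.\ \(\sO_R(D)\) for a Weil divisor \(D\), and the analytic triviality says that \(\sO_{X^{an},x}(D)\) is free. Since \(\widehat{\sO}_{X,x}\) is faithfully flat over both \(R\) and \(\sO^{an}_{X,x}\), and \(\sO_{X^{an},x}(D)\) is free, \(\widehat R\otimes\sO_R(D)\) is free by flat divisorial base change (Lemma~\ref{lem:generalized_flat_pullback}). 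Then Lemma~\ref{lem:ff_detects_cartier} shows that \(\sO_R(D)\) is free, so \(L\) is trivial.

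The second step, which is the heart of the argument, is that \(\Pic^{an}(V^*)\to H^2(V^*,\bbZ)\) is injective. By the exponential sequence it suffices to show \(H^1(V^*,\sO_{V^*})=0\). Here I use the rational singularity hypothesis together with \(n\ge3\). Since \(V\) is Stein, local cohomology gives \(H^1(V^*,\sO)\cong H^2_{x}(V,\sO)\). Rational singularities are Cohen--Macaulay, so \(\depth\sO_{X,x}=n\ge 3\). Hence \(H^2_x(\sO)=0\) (using depth of the analytic local ring, which equals that of \(R\) by faithful flatness to the completion), and so \(H^1(V^*,\sO)=0\). This is exactly where \(n\ge3\) enters: for surfaces \(H^2_x\) need not vanish, which is why that case is treated separately.

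The main obstacle is bookkeeping rather than an idea: making the comparison between the algebraic \(\Pic(U)\) of the local scheme and the analytic punctured neighbourhood precise. This includes extending line bundles from \(\Spec R\setminus\{x\}\) to a Zariski neighbourhood and shrinking compatibly, and checking that the definition of \(c_1\) is independent of these choices. I would handle it by working directly with reflexive modules \(\sO_R(D)\), as done above, so that only Cartierness at \(x\) has to be descended, and Lemma~\ref{lem:ff_detects_cartier} does precisely that.
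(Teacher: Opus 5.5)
Your proposal is correct and follows essentially the same route as the paper's proof. Both use the exponential sequence on a small contractible Stein punctured neighbourhood, identify $H^1(V^*,\sO)\cong H^2_x(V,\sO)$ via Cartan's Theorem B, and kill this group using Cohen--Macaulayness with $n\ge 3$. The one difference is in the injectivity of $\Pic(U)\to\Pic(U^{an})$: the paper simply cites Koll\'ar for it, while you prove it via reflexive modules and Lemma~\ref{lem:ff_detects_cartier}. That argument is valid, and it can be shortened by applying the lemma directly to the faithfully flat local map $R\to\sO^{an}_{X,x}$ together with associativity of tensor products, with no need for the completion or Lemma~\ref{lem:generalized_flat_pullback}.
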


\begin{proof}
Choose a sufficiently small contractible Stein analytic representative \(W\) of the analytic germ
\((x,\Spec R)\), and set
\[
U^{an}:=W\setminus\{x\}.
\]
Then \(U^{an}\) is homotopy equivalent to the topological link \(M\). Hence
\[
H^2(U^{an},\bbZ)\cong H^2(M,\bbZ).
\]

We recall Kollár's exponential sequence argument. By Kollár
\cite[\S4, (34), ``Exponential sequence'']{Kol16}, the analytic exponential sequence on
\(U^{an}\),
\[
0\longrightarrow \bbZ
\longrightarrow \sO_{U^{an}}
\xrightarrow{\exp(2\pi i\,\cdot)}
\sO_{U^{an}}^\times
\longrightarrow 0,
\]
gives the exact segment
\[
H^1(U^{an},\sO_{U^{an}})
\longrightarrow
\Pic(U^{an})
\xrightarrow{c_1}
H^2(U^{an},\bbZ).
\]
Thus the kernel of \(c_1\) is contained in the image of
\(H^1(U^{an},\sO_{U^{an}})\). Equivalently, in the notation of Kollár's
Proposition 35, this kernel is the subgroup denoted
\[
\Pic^{\mathrm{loc}-\circ}(x,X).
\]

We claim that
\[
H^1(U^{an},\sO_{U^{an}})=0.
\]
Indeed, the local cohomology exact sequence for the pair \((W,\{x\})\) contains
\[
H^1(W,\sO_W)
\longrightarrow
H^1(U^{an},\sO_{U^{an}})
\longrightarrow
H^2_x(W,\sO_W)
\longrightarrow
H^2(W,\sO_W).
\]
Since \(W\) is Stein, Cartan's Theorem B gives
\[
H^1(W,\sO_W)=H^2(W,\sO_W)=0.
\]
Therefore
\[
H^1(U^{an},\sO_{U^{an}})
\cong
H^2_x(W,\sO_W).
\]

Since \(R\) has a rational singularity, it is Cohen--Macaulay. Hence
\[
\depth_x R=\dim R=n\ge 3.
\]
By the depth criterion for local cohomology, for instance Hartshorne
\cite[Chapter~III, Theorem~3.8]{Har67}, we have
\[
H^i_x(W,\sO_W)=0
\qquad\text{for }i<3.
\]
In particular,
\[
H^2_x(W,\sO_W)=0.
\]
Thus
\[
H^1(U^{an},\sO_{U^{an}})=0.
\]

Consequently the first Chern class map is injective:
\[
c_1:\Pic(U^{an})\hookrightarrow H^2(U^{an},\bbZ)
\cong H^2(M,\bbZ).
\]
By \cite[Section 3.5, (5.1.1), (5.1.2)]{Kol16}, it induces an injective map from the algebraic Picard group, and by abusing notation, we still denote it by $c_1$:
\[
c_1: \Pic(U)\hookrightarrow H^2(U^{an},\bbZ)
\cong H^2(M,\bbZ).\]
The lemma follows.
\end{proof}

\begin{lem}[Residual Cartier index bounded by link torsion]
\label{lem:residual_index_link}
Let $R$ be the local ring at a closed point $x$ of a normal complex algebraic variety. Assume that
$R$ has a rational singularity and that
\[
\dim R=d\ge 2.
\]
Let
\[
U\coloneqq \Spec R\setminus\{x\}.
\]
Let $M$ be the topological link of the complex analytic germ $(x,\Spec R)$. Let
\[
e=\exp H^2(M,\bbZ)_{\mathrm{tors}},
\]
with $e=1$ if this torsion subgroup is zero. Let $n$ be a positive integer. Let $D$ be a Weil divisor on $\Spec R$ that is $\mathbb Q$-Cartier at $x$. Assume that $nD$ is Cartier at every point of $U$. Then the Cartier index of $D$ at $x$ divides $ne$.
\end{lem}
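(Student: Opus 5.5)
Set $L\coloneqq \sO_R(nD)|_U$. By hypothesis $nD$ is Cartier on $U$, so $L$ is an invertible sheaf on $U$, and it defines a class $[L]\in\Pic(U)$. Since $D$ is $\bbQ$-Cartier at $x$, there is $m\ge1$ with $mD$ Cartier on $\Spec R$. Then $\sO_R(mnD)$ is free, and hence $L^{\otimes m}|_U\cong \sO_R(mnD)|_U$ is trivial. Thus $[L]$ is a torsion element of $\Pic(U)$. The plan is to prove that $[L]^{e}$ is trivial in $\Pic(U)$, and then conclude as follows. Because $R$ is normal of dimension $\ge2$, a reflexive rank one module is determined by its restriction to $U$: it equals $j_*$ of that restriction. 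If $\sO_R(neD)|_U\cong \sO_U$, then $\sO_R(neD)\cong R$, so $neD$ is Cartier at $x$. The Cartier index $c$ of $D$ at $x$ is the generator of the subgroup of integers $t$ with $tD$ Cartier at $x$, so $c$ divides $ne$.

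\textbf{Case $d\ge3$.} Lemma~\ref{lem:chern_injection} gives an injection $c_1\colon\Pic(U)\hookrightarrow H^2(M,\bbZ)$. The image $c_1([L])$ is torsion, so it lies in $H^2(M,\bbZ)_{\mathrm{tors}}$ and is killed by $e$. By injectivity $[L]^{e}=1$, which is what we need.

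\textbf{Case $d=2$.} Here Lemma~\ref{lem:chern_injection} does not apply: $H^2_x(\sO)$ need not vanish when the depth is $2$. We use Brieskorn's theory of rational surface singularities \cite{Bri68} instead. For a rational complex surface germ, the local class group $\Cl(R)=\Pic(U)$ (in the analytic setting, with the algebraic group injecting) is finite. Take a minimal good resolution with exceptional curves $E_1,\dots,E_r$, which are rational curves forming a tree. Brieskorn identifies $\Cl$ with $\operatorname{coker}$ of the intersection matrix $\bbZ^r\to\bbZ^r=\operatorname{Hom}(\bbZ^r,\bbZ)$. The link $M$ is the boundary of a plumbed tubular neighbourhood, and the plumbing long exact sequence identifies $H_1(M,\bbZ)$ with the same cokernel. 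By Poincaré duality and universal coefficients on the closed oriented $3$-manifold $M$, we have $H^2(M,\bbZ)\cong H_1(M,\bbZ)$, and this group is finite here. Hence $\Pic(U)\cong H^2(M,\bbZ)=H^2(M,\bbZ)_{\mathrm{tors}}$, so every element of $\Pic(U)$ is killed by $e$, and $[L]^e=1$.

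The main obstacle is the surface case. Its most delicate point is passing from the algebraic local ring to the analytic germ, that is, showing that algebraic $\Pic(U)$ injects into the analytic class group. For this I would cite the same comparison from \cite{Kol16} used in Lemma~\ref{lem:chern_injection}, or Artin approximation together with faithful flatness of $R\to R^{an}$ combined with Lemma~\ref{lem:ff_detects_cartier} applied to $\sO_R(neD)$. The faithful flatness route is the one I would try first, since it directly shows $neD$ Cartier once $\sO_{R^{an}}(neD)$ is free.
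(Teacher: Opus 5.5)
Your proposal is correct and follows essentially the same route as the paper. For $d\ge 3$ you use the injection $c_1\colon\Pic(U)\hookrightarrow H^2(M,\bbZ)$ of Lemma~\ref{lem:chern_injection}; for $d=2$ you combine Koll\'ar's algebraic-to-analytic injection with Brieskorn's identification of the analytic class group with $H_1(M,\bbZ)\cong H^2(M,\bbZ)$; and you extend triviality of $\sO_U(neD)$ across the closed point using that a normal local ring of dimension at least $2$ has the same codimension one points as its punctured spectrum (your alternative faithful-flatness route via Lemmas~\ref{lem:generalized_flat_pullback} and~\ref{lem:ff_detects_cartier} also works, since $R\to R^{an}$ is a flat local injection of normal noetherian domains).
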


\begin{proof}
If $R$ is regular, then every Weil divisor on $\Spec R$ is Cartier, so there is nothing to prove. Thus we may assume, when needed, that the germ is singular.

The divisor $nD$ is Cartier on $U$, so it defines a line bundle
\[
L\coloneqq \sO_U(nD)\in \Pic(U).
\]
Since $D$ is $\mathbb Q$-Cartier at $x$, there exists $r>0$ such that $rD$ is Cartier on the local scheme $\Spec R$. Hence $rnD$ is Cartier on $\Spec R$. Since $\Spec R$ is local, every line bundle on $\Spec R$ is trivial; equivalently
\[
\sO_{\Spec R}(rnD)\cong \sO_{\Spec R}.
\]
Restricting to $U$, we obtain
\[
L^r=\sO_U(rnD)\cong \sO_U.
\]
Thus $L$ has finite order in $\Pic(U)$. Let
\[
q\coloneqq\ord_{\Pic(U)}(L).
\]

We first show that $q\mid e$. Suppose first that $d=2$. Let $R^{an}$ be the analytic local ring at $x$, and let $U^{an}= \Spec R^{an}\setminus\{x\}$. Since $R^{an}$ is normal and restriction identifies the Weil divisor class group of $U^{an}$ to the Weil divisor class group of $\Spec R^{an}$, we have an injection: 
\[
\Pic(U^{an})\hookrightarrow \Cl (U^{an})\cong\Cl(R^{an}),
\]
By \cite[Section 3.5, (5.1.1)]{Kol16}, we have an injection:
\[\Pic (U)\hookrightarrow \Pic(U^{an})\],
By \cite[Satz~1.5]{Bri68}
\[
\Cl(R^{an})\cong H_1(M,\bbZ),
\]
and this group is finite; more precisely, its order is the absolute value of the determinant of the exceptional intersection matrix. The link \(M\) is a closed oriented three-manifold. Poincar\'e duality identifies \(H^2(M,\bbZ)\) with \(H_1(M,\bbZ)\) up to the standard universal coefficient identification for a closed oriented three-manifold; in particular the torsion exponents agree:
\[
H^2(M,\bbZ)\cong H_1(M,\bbZ).
\]
Thus \(\Pic(U)\) is a finite group whose exponent divides \(e\). Therefore the order \(q\) of \(L\) divides \(e\).

Now suppose that $d\ge 3$. By Lemma~\ref{lem:chern_injection}, the first Chern class map gives an injection
\[
\Pic(U)\hookrightarrow H^2(M,\bbZ).
\]
Therefore the order $q$ of $L$ is the same as the order of its image in $H^2(M,\bbZ)$. Since $L$ is torsion, its image lies in $H^2(M,\bbZ)_{\mathrm{tors}}$. Hence $q\mid e$.

By definition of $q$, we have
\[
L^q\cong \sO_U.
\]
Equivalently,
\[
\sO_U(qnD)\cong \sO_U.
\]
Thus the Cartier divisor $qnD$ is principal on $U$. Hence there exists
\[
f\in \Frac(R)^\times
\]
such that
\[
(qnD)|_U=\divisor_U(f)
\]
as Weil divisors on $U$.

Since $R$ is normal and $d\ge 2$, the closed point $x$ is not a codimension one point. Hence the schemes $\Spec R$ and $U$ have the same codimension one points. Therefore equality of Weil divisors on $U$ implies equality of Weil divisors on $\Spec R$:
\[
qnD=\divisor_{\Spec R}(f).
\]
Thus $qnD$ is principal on $\Spec R$, in particular Cartier at $x$.

Consequently the Cartier index of $D$ at $x$ divides $qn$. Since $q\mid e$, it divides $ne$.
\end{proof}

\section{Bounded degree local residual estimates}\label{sec:local_estimates}

The goal of this section is to prove the bounded degree local estimate used in the induction. We first prove the closed point form over an algebraically closed field isomorphic to \(\bbC\). We then prove the corresponding statement for arbitrary prime localizations by a Noether normalization reduction to the closed point case, followed by flat divisorial base change and faithfully flat descent.

\begin{lem}[Bounded degree geometric closed point residual Cartier index bound]
\label{lem:bounded_geom_residual_index}
Let \(N,s,\delta\) be integers with
\[
N\ge 1,\qquad s\ge 1,\qquad \delta\ge 1.
\]
Then there exists a positive integer
\[
e_{\mathrm{bd}}=e_{\mathrm{bd}}(N,s,\delta),
\]
depending only on \(N,s,\delta\), with the following property.

Let \(\Omega\) be an algebraically closed field of characteristic \(0\), and let
\[
\sigma:\Omega\longrightarrow \mathbb C
\]
be a field isomorphism. Let \(t\) be an integer with
\[
1\le t\le s.
\]
Let
\[
F_1,\ldots,F_t\in \Omega[z_1,\ldots,z_N]
\]
be polynomials, each of total degree at most \(\delta\). Let
\[
a_1,\ldots,a_N\in \Omega
\]
satisfy
\[
F_i(a_1,\ldots,a_N)=0
\qquad
\text{for every }1\le i\le t.
\]

Set
\[
A:=\Omega[z_1,\ldots,z_N]/(F_1,\ldots,F_t).
\]
Let \(\mathfrak m\subset A\) be the maximal ideal generated by the images of
\[
z_1-a_1,\ldots,z_N-a_N.
\]
Put
\[
R:=A_{\mathfrak m},
\qquad
U:=\Spec R\setminus\{\mathfrak m\}.
\]
Assume that \(R\) is a normal local domain, that \(R\) has a rational singularity, and that
\[
\dim R\ge 2.
\]

Let \(D\) be a Weil divisor on \(\Spec R\), and let \(n\) be a positive integer. Assume that \(D\) is \(\mathbb Q\)-Cartier at the closed point of \(\Spec R\), and that \(nD\) is Cartier at every point of \(U\). Then the Cartier index of \(D\) at the closed point of \(\Spec R\) divides
\[
ne_{\mathrm{bd}}.
\]
\end{lem}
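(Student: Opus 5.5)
Transport everything to $\bbC$ via $\sigma$, then combine Lemma~\ref{lem:residual_index_link} with a uniform bound on the torsion of $H^2$ of the link coming from Lemma~\ref{lem:semialg_torsion}.

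\emph{Step 1: reduction to $\bbC$.} Apply $\sigma$ to the coefficients of the $F_i$ and to the point $(a_1,\ldots,a_N)$. This gives polynomials $F_i^\sigma\in\bbC[z_1,\ldots,z_N]$ of degree at most $\delta$ and a point $a^\sigma\in\bbC^N$. The ring isomorphism $A\cong A^\sigma:=\bbC[z]/(F^\sigma)$ induced by $\sigma$ carries $\fm$ to the maximal ideal of $a^\sigma$. Hence $R\cong R^\sigma:=A^\sigma_{\fm^\sigma}$ as abstract rings, compatibly with Weil divisors, divisorial modules and Cartierness. Normality, rationality and dimension are ring-theoretic properties (rationality is defined via a resolution, and this transports along the isomorphism of schemes). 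So we may assume $\Omega=\bbC$. After the translation $z\mapsto z-a$, which preserves degrees, we may also assume $a=0$.

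\emph{Step 2: applying Lemma~\ref{lem:residual_index_link}.} Now $R$ is the local ring at the closed point $0$ of the complex variety $X=V(F_1,\ldots,F_t)\subset\bbC^N$, or rather of the irreducible component through $0$. Since $R$ is a normal domain, only one component passes through $0$, and we may shrink $X$ to an affine neighbourhood that is a normal variety. The analytic germ is unchanged, so the link is still computed from $X$. Lemma~\ref{lem:residual_index_link} then shows that the Cartier index divides $ne$, where $e=\exp H^2(\link(0,X),\bbZ)_{\mathrm{tors}}$.

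\emph{Step 3: the uniform torsion bound.} It remains to bound $e$ in terms of $(N,s,\delta)$ alone. Identify $\bbC^N=\bbR^{2N}$. Each $F_i$ yields two real polynomials $\mathrm{Re}\,F_i$ and $\mathrm{Im}\,F_i$ of degree at most $\delta$. For small $\epsilon>0$, the link is
\[
T_\epsilon=\{x\in\bbR^{2N} : \mathrm{Re}F_i(x)=\mathrm{Im}F_i(x)=0 \ \text{for all } i,\ |x|^2-\epsilon^2=0\}.
\]
This is a closed and bounded $\mathcal P$-semialgebraic set, where $\mathcal P$ consists of at most $2s+1$ polynomials of degree at most $\max(\delta,2)$ in $2N$ variables. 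By Definition~\ref{def:link}, $T_\epsilon$ is homeomorphic to $\link(0,X)$ for small $\epsilon$, so it computes $H^2$ of the link. Lemma~\ref{lem:semialg_torsion}, with $k=2N$, $s'=2s+1$, $d'=\max(\delta,2)$ and $q=2$, gives $e\le M$. Take $e_{\mathrm{bd}}:=M!$ (or $\lcm(1,\ldots,M)$). Then $e\mid e_{\mathrm{bd}}$, and the Cartier index divides $ne_{\mathrm{bd}}$.

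\emph{Main obstacle.} The delicate point is Step 3 together with the shrinking in Step 2. The link of $(0,X)$ must be computed by the sphere intersection with the \emph{full} zero set $V(F)$, not merely with the normal component. This holds because near $0$ the set $V(F)$ coincides with the analytic germ of $\Spec R$: other components do not pass through $0$, and embedded non-reducedness does not affect the underlying analytic set. We therefore also need $\epsilon$ small enough to avoid components not through $0$; this is guaranteed by the conic structure theorem underlying Definition~\ref{def:link}. The degree bound must not depend on the choice of equations cutting out only that component, and it does not, since we use the original $F_i$.
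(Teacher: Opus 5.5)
Your proposal is correct and follows the paper's proof essentially step for step: transport along $\sigma$ to $\bbC$, apply Lemma~\ref{lem:residual_index_link}, and bound the torsion exponent of $H^2$ of the link uniformly via Lemma~\ref{lem:semialg_torsion} with $k=2N$, at most $2s+1$ real polynomials of degree at most $\max\{\delta,2\}$, and $q=2$, taking $e_{\mathrm{bd}}=\operatorname{lcm}(1,\ldots,M)$. You also make explicit two points the paper passes over silently: shrinking to a normal affine neighbourhood so that Lemma~\ref{lem:residual_index_link} literally applies, and checking that intersecting the full zero set $V(F)$ with a small sphere computes the link of the germ.
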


\begin{proof}
Apply Lemma~\ref{lem:semialg_torsion} with
\[
k=2N,\qquad
s_0=2s+1,\qquad
d_0=\max\{\delta,2\},
\qquad
q=2.
\]
It gives an integer
\[
M=M(N,s,\delta)\ge 1
\]
such that every closed bounded semialgebraic subset of
\[
\mathbb R^{2N}
\]
defined using at most \(2s+1\) real polynomials of degree at most
\[
\max\{\delta,2\}
\]
has \(H^2(-,\mathbb Z)\)-torsion exponent at most \(M\).

Define
\[
e_{\mathrm{bd}}:=\operatorname{lcm}(1,2,\ldots,M).
\]

For each \(i\), let
\[
G_i\in \mathbb C[z_1,\ldots,z_N]
\]
be the polynomial obtained from \(F_i\) by applying \(\sigma\) to all coefficients. For \(1\le j\le N\), put
\[
b_j:=\sigma(a_j).
\]
Set
\[
A_\sigma:=\mathbb C[z_1,\ldots,z_N]/(G_1,\ldots,G_t).
\]
Let \(\mathfrak m_\sigma\subset A_\sigma\) be the maximal ideal generated by the images of
\[
z_1-b_1,\ldots,z_N-b_N.
\]
Put
\[
T:=(A_\sigma)_{\mathfrak m_\sigma}.
\]

The coefficient-wise map induced by \(\sigma\), together with
\[
z_j\longmapsto z_j,
\]
gives a ring isomorphism
\[
A\cong A_\sigma
\]
carrying \(\mathfrak m\) to \(\mathfrak m_\sigma\). After localizing, we obtain a local ring isomorphism
\[
R\cong T.
\]
Therefore \(T\) is a normal local domain, has a rational singularity, and has dimension at least \(2\).

Let
\[
L:=\operatorname{link}(\mathfrak m_\sigma,\Spec A_\sigma)
\]
be the topological link of the corresponding complex analytic germ. For sufficiently small \(\epsilon>0\), the link \(L\) is homeomorphic to the intersection of the common zero set of
\[
G_1,\ldots,G_t
\]
inside \(\mathbb C^N\) with the Euclidean sphere
\[
\sum_{j=1}^{N}|z_j-b_j|^2=\epsilon^2.
\]
Thus \(L\) is a closed and bounded semialgebraic subset of
\[
\mathbb R^{2N}\cong \mathbb C^N.
\]
It is described by the real and imaginary parts of
\[
G_1,\ldots,G_t
\]
together with one quadratic sphere equation. Hence \(L\) is described using at most
\[
2t+1\le 2s+1
\]
real polynomials, each of degree at most
\[
\max\{\delta,2\}.
\]

By the choice of \(M\), the torsion subgroup of
\[
H^2(L,\mathbb Z)
\]
has exponent at most \(M\). Let
\[
e_{\mathrm{link}}:=\exp H^2(L,\mathbb Z)_{\mathrm{tors}},
\]
with the convention \(e_{\mathrm{link}}=1\) if the torsion subgroup is zero. Then
\[
e_{\mathrm{link}}\mid e_{\mathrm{bd}}.
\]

Transport \(D\) through the local ring isomorphism \(R\cong T\) to a Weil divisor \(D_\sigma\) on \(\Spec T\). The Cartier index of \(D\) at the closed point of \(\Spec R\) equals the Cartier index of \(D_\sigma\) at the closed point of \(\Spec T\), because the local ring isomorphism identifies the divisorial modules of all positive multiples of the two divisors. It also identifies the punctured spectra, so \(nD_\sigma\) is Cartier on the punctured spectrum of \(T\).

Lemma~\ref{lem:residual_index_link} applies to the normal complex rational local ring \(T\). It gives that the Cartier index of \(D_\sigma\) divides
\[
ne_{\mathrm{link}}.
\]
Since
\[
e_{\mathrm{link}}\mid e_{\mathrm{bd}},
\]
the Cartier index of \(D\) divides
\[
ne_{\mathrm{bd}}.
\]
This proves the lemma.
\end{proof}

\begin{prop}[Bounded degree affine prime residual Cartier index bound]\label{prop:prime_residual_bound}
Let \(N,t,\delta\) be integers with \(N\geq 1\), \(t\geq 1\), and \(\delta\geq 1\). Then there exists a positive integer \(e_{\mathrm{pr}}\), depending only on \(N,t,\delta\), with the following property.

Let \(P_1,\ldots,P_t\) be polynomials in \(\bbC[z_1,\ldots,z_N]\), each of total degree at most \(\delta\), and set
\[
A:=\bbC[z_1,\ldots,z_N]/(P_1,\ldots,P_t).
\]
Let \(\fp\) be a prime ideal of \(A\), and put \(R:=A_\fp\). Assume that \(R\) is a normal local domain with a rational singularity and that
\[
\dim R\ge 2.
\]
Let \(D\) be a Weil divisor on \(\Spec R\), and let \(n\) be a positive integer. Assume that \(D\) is \(\bbQ\)-Cartier at the closed point of \(\Spec R\), and that \(nD\) is Cartier on the punctured spectrum of \(R\). Then the Cartier index of \(D\) at the closed point of \(\Spec R\) divides
\[
ne_{\mathrm{pr}}.
\]
\end{prop}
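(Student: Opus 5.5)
We reduce to Lemma~\ref{lem:bounded_geom_residual_index} by the Noether normalization/residue field argument outlined in the introduction. Let $r:=\dim A/\fp$ be the transcendence degree of $\kappa(\fp)$ over $\bbC$. If $r=0$, then $\fp$ is a maximal ideal, given by a closed point $(a_1,\ldots,a_N)\in\bbC^N$, and Lemma~\ref{lem:bounded_geom_residual_index} applies directly with $\Omega=\bbC$, $\sigma=\mathrm{id}$, $s=t$. This gives $e_{\mathrm{bd}}(N,t,\delta)$.

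For general $r$, choose a Noether normalization of $A/\fp$ by \emph{linear} forms. After a general $\bbC$-linear change of coordinates, which preserves the number and degrees of the $P_i$, the images of $z_1,\ldots,z_r$ in $A/\fp$ are algebraically independent and $A/\fp$ is finite over $\bbC[z_1,\ldots,z_r]$. Let $K:=\bbC(z_1,\ldots,z_r)$. Then $\fp$ contracts to $0$ in $\bbC[z_1,\ldots,z_r]$, so $R=A_\fp$ is a localization of $A_K:=K[z_{r+1},\ldots,z_N]/(P_1,\ldots,P_t)$, with the $P_i$ viewed as polynomials of degree at most $\delta$ in $N-r$ variables over $K$. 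Moreover $\fp A_K$ is a maximal ideal, because $\kappa(\fp)$ is finite over $K$. Let $\Omega:=\overline K$. This is algebraically closed of characteristic $0$ with cardinality continuum, so there is an abstract isomorphism $\sigma\colon\Omega\to\bbC$. Set $A_\Omega:=\Omega\otimes_K A_K$ and choose a maximal ideal $\fq$ of $A_\Omega$ over $\fp A_K$. It corresponds to a point $(a_{r+1},\ldots,a_N)\in\Omega^{N-r}$ on which all $P_i$ vanish. Put $S:=(A_\Omega)_\fq$.

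By Lemma~\ref{lem:alg_field_rational}, applied to $K\subset\Omega$ (algebraic), $A_K$ and $\fq$, the ring $S$ is a normal local domain with a rational singularity and $\dim S=\dim R\ge2$. The map $R\to S$ is flat, local and injective, hence faithfully flat. Now let $D_S$ be the flat divisorial base change of $D$ from Lemma~\ref{lem:generalized_flat_pullback}, so that $S\otimes_R\sO_R(mD)\cong\sO_S(mD_S)$ for all $m\ge1$. We need to check the hypotheses of Lemma~\ref{lem:bounded_geom_residual_index} for $D_S$.

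\begin{enumerate}[label=(\roman*)]
\item Some multiple $mD$ is Cartier, so $\sO_S(mD_S)$ is free and $D_S$ is $\bbQ$-Cartier.
\item For a nonmaximal prime $\fq'$ of $S$, its contraction $\fp'$ to $R$ is nonmaximal. This holds because $R\to S$ has zero-dimensional closed fiber: $S/\fp S$ is a localization of $\Omega\otimes_K\kappa(\fp)$, which is Artinian. So $\fq'$ lies over the punctured spectrum of $R$, where $\sO_R(nD)$ is invertible. Hence $\sO_S(nD_S)_{\fq'}$ is free.
\end{enumerate}

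Applying Lemma~\ref{lem:bounded_geom_residual_index} to $A_\Omega$ (with $N-r\le N$ variables and $t$ equations of degree at most $\delta$) gives that the Cartier index $c$ of $D_S$ divides $ne_{\mathrm{bd}}(N,t,\delta)$, after taking the bound monotone in $N$. For instance, put $e_{\mathrm{pr}}:=\lcm_{N'\le N} e_{\mathrm{bd}}(N',t,\delta)$, or simply pad with dummy variables. Then $S\otimes_R\sO_R(cD)\cong\sO_S(cD_S)$ is free of rank one, and Lemma~\ref{lem:ff_detects_cartier} shows that $cD$ is Cartier on $\Spec R$. Hence the Cartier index of $D$ divides $c$, which divides $ne_{\mathrm{pr}}$.

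The main obstacle is bookkeeping in the reduction rather than any new idea. Three points need care:
\begin{itemize}
\item the linear normalization must keep the equation count and degree bounds;
\item the punctured spectrum of $S$ must map into the punctured spectrum of $R$, which uses the zero-dimensional closed fiber;
\item one must justify the abstract isomorphism $\overline{\bbC(z_1,\ldots,z_r)}\cong\bbC$, which follows from equal transcendence degree (continuum) over $\bbQ$.
\end{itemize}
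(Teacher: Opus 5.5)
Your reduction is the paper's own: linear Noether normalization of $A/\fp$, localization at the nonzero elements of the polynomial subring so that $\fp$ becomes maximal, base change to an algebraic closure $\Omega\cong\bbC$, Lemma~\ref{lem:alg_field_rational}, flat divisorial base change, the zero-dimensional closed fibre, Lemma~\ref{lem:bounded_geom_residual_index}, and descent via Lemma~\ref{lem:ff_detects_cartier}. The differences are only bookkeeping. You straighten the normalization by a linear coordinate change, so the same $t$ equations live in $N-r$ variables. The paper keeps all $N$ variables and appends the $m$ equations $L_\ell-\eta_\ell$, which is why it takes $e_{\mathrm{pr}}=e_{\mathrm{bd}}(N,t+N,\delta)$. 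Your $\Omega=\overline K$ is $K$-isomorphic to the paper's $\overline F$.

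One step does not apply as written. Lemma~\ref{lem:alg_field_rational} assumes a finitely generated $K$-\emph{domain}, but $A_K=S_0^{-1}A$ need not be a domain, or even reduced. Only one minimal prime of $A$ lies in $\fp$, but other components (or embedded components) of $\Spec A$ that dominate $\bbA^r$ survive the localization. For example, take $A=\bbC[x,y,u,w]/(w(w-1))$ and $\fp=(x,y,w)$. Then $R$ is regular of dimension $2$ and $r=1$, yet $A_K=\bbC(u)[x,y,w]/(w(w-1))$ has zero divisors.

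The paper repairs this by passing to $A_K/J$, where $J=\ker(A_K\to R)$. This is a prime ideal contained in $\fp A_K$, so $A_K/J$ is a finitely generated $K$-domain whose localization at $\fp$ is still $R$. Moreover $J$ dies in $S$: each $j\in J$ is annihilated by some $b\notin\fp A_K$, hence $b\notin\fq$, so $b$ is a unit in $S$. Therefore $(\Omega\otimes_K A_K/J)_{\fq}=S$, and the lemma applies to $A_K/J$. Alternatively, observe that the lemma's proof really only uses $R$, since $S$ is a localization of $\Omega\otimes_K R$. With this reduction inserted, your argument is complete.
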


\begin{proof}
Let
\[
e_{\mathrm{pr}}:=e_{\mathrm{bd}}(N,t+N,\delta),
\]
where \(e_{\mathrm{bd}}(N,t+N,\delta)\) is the integer supplied by Lemma~\ref{lem:bounded_geom_residual_index}. We prove that this integer works.

Let
\[
F:=\Frac(A/\fp),
\qquad
m:=\trdeg_\bbC F.
\]
Since \(A\) is generated by \(N\) elements over \(\bbC\), we have \(0\le m\le N\).

We use linear Noether normalization over the infinite field \(\bbC\): if \(k\) is an infinite field, \(V\) is an integral closed subvariety of affine \(N\)-space over \(k\), and \(m=\dim V\), then there are affine linear polynomials \(L_1,\ldots,L_m\) in the affine coordinates such that the coordinate ring \(k[V]\) is finite over the polynomial subring \(k[L_1,\ldots,L_m]\) \cite[Chapter~I, \S~1, Theorem~1]{Mum76}. Apply this to the integral affine variety \(\Spec(A/\fp)\) over \(\bbC\). Choose affine linear polynomials
\[
L_1,\ldots,L_m\in \bbC[z_1,\ldots,z_N]
\]
such that the images of these polynomials in \(A/\fp\) make \(A/\fp\) finite over \(\bbC[y_1,\ldots,y_m]\), where \(y_\ell\) is the image of \(L_\ell\). Let
\[
R_0:=\bbC[Y_1,\ldots,Y_m]
\]
with the convention \(R_0=\bbC\) if \(m=0\), and let \(R_0\to A\) send \(Y_\ell\) to the image of \(L_\ell\) in \(A\). The contraction of \(\fp\) to \(R_0\) is zero, and \(F\) is a finite field extension of
\[
K:=\Frac(R_0)=\bbC(Y_1,\ldots,Y_m).
\]
Since \(K\) has characteristic \(0\), the finite extension \(F/K\) is separable.

Let
\[
S_0:=R_0\setminus\{0\},
\qquad
B:=S_0^{-1}A,
\qquad
\fq_0:=S_0^{-1}\fp.
\]
Then \(\fq_0\) is a prime ideal of \(B\), and
\[
B/\fq_0=S_0^{-1}(A/\fp)
\]
is a finitely generated \(K\)-domain with fraction field \(F\). Since \(F/K\) is finite, \(B/\fq_0\) is a finite-dimensional \(K\)-domain, hence a field. Thus \(\fq_0\) is maximal and \(B/\fq_0=F\). By transitivity of localization,
\[
B_{\fq_0}\cong A_\fp=R.
\]
If \(\eta_\ell\) denotes the image of \(Y_\ell\) in \(K\), then
\[
B\cong K[z_1,\ldots,z_N]/(P_1,\ldots,P_t,L_1-\eta_1,\ldots,L_m-\eta_m).
\]
This presentation has \(t+m\le t+N\) equations, each of total degree at most \(\delta\).

Let \(J\) be the kernel of the localization homomorphism \(B\to B_{\fq_0}\). Since \(B_{\fq_0}\cong R\) is a domain, \(J\) is a prime ideal of \(B\). Put
\[
B':=B/J,
\qquad
\fq':=\fq_0/J.
\]
Then \(B'\) is a finitely generated \(K\)-domain, \((B')_{\fq'}\cong R\), and \(B'/\fq'=F\).

Let \(\Omega\) be an algebraic closure of \(F\), and let \(\iota:F\hookrightarrow\Omega\) be the inclusion. By Steinitz's theorem \cite{Ste10}, an algebraically closed field of characteristic \(0\) is determined up to abstract field isomorphism by its transcendence degree over \(\bbQ\). Here \(\Omega\) is algebraic over the finite extension \(F\) of \(\bbC(Y_1,\ldots,Y_m)\), and \(m\le N\). Hence
\[
\trdeg_\bbQ\Omega=\trdeg_\bbQ\bbC.
\]
Choose a field isomorphism
\[
\sigma:\Omega\longrightarrow\bbC.
\]

Let
\[
A_\Omega:=\Omega[z_1,\ldots,z_N]/(P_1,\ldots,P_t,L_1-\eta_1,\ldots,L_m-\eta_m),
\]
where the coefficients are viewed in \(\Omega\) through
\[
K\longrightarrow F\xrightarrow{\iota}\Omega.
\]
For \(1\le j\le N\), let \(a_j\) be the image of \(z_j\) in \(F=B/\fq_0\). Let \(\fm_\Omega\subset A_\Omega\) be the maximal ideal generated by the images of
\[
z_1-\iota(a_1),\ldots,z_N-\iota(a_N).
\]
The ring \((A_\Omega)_{\fm_\Omega}\) is the local ring at a closed point of a closed subscheme of \(\bbA^N_\Omega\) defined by \(t+m\le t+N\) equations of degree at most \(\delta\).

The quotient map \(B\to B'\) does not change this local ring after base change. Indeed, if \(j\in J\), then \(j/1=0\) in \(B_{\fq_0}\), so \(bj=0\) in \(B\) for some \(b\in B\setminus\fq_0\). The image of \(b\) is not in \(\fm_\Omega\), and therefore \(j\) becomes zero after localizing \(\Omega\otimes_KB\) at \(\fm_\Omega\). Hence
\[
(A_\Omega)_{\fm_\Omega}
\cong
(\Omega\otimes_KB')_{\fm_\Omega'},
\]
where \(\fm_\Omega'\) is the corresponding maximal ideal of \(\Omega\otimes_KB'\). Define
\[
S:=(\Omega\otimes_KB')_{\fm_\Omega'}.
\]
The natural local homomorphism
\[
R\cong (B')_{\fq'}\longrightarrow S
\]
is flat, because it is obtained from the field extension \(K\to\Omega\) by base change and localization.

The closed fiber is the localization of
\[
\Omega\otimes_KF
\]
at the maximal ideal corresponding to the embedding \(\iota:F\hookrightarrow\Omega\). Since \(F/K\) is finite separable and \(\Omega/F\) is algebraic, this localization is the field \(\Omega\). Thus \(R\to S\) is faithfully flat and local. In particular, it is injective.

Lemma~\ref{lem:alg_field_rational}, applied to the finitely generated \(K\)-domain \(B'\) and to the algebraic field extension \(\Omega/K\), shows that \(S\) is a normal local domain with a rational singularity, and that
\[
\dim S=\dim R\ge 2.
\]

Since \(R\to S\) is injective and flat and both rings are normal noetherian domains, Lemma~\ref{lem:generalized_flat_pullback} gives the flat divisorial base change \(D_S\) of \(D\) to \(\Spec S\). For every integer \(q\ge 1\), the base change of \(qD\) is \(qD_S\), and
\[
S\otimes_R\sO_R(qD)\cong \sO_S(qD_S).
\]
Since \(D\) is \(\bbQ\)-Cartier at the closed point of \(\Spec R\), this compatibility also shows that \(D_S\) is \(\bbQ\)-Cartier at the closed point of \(\Spec S\).

We next show that \(nD_S\) is Cartier on the punctured spectrum of \(S\). Let \(Q\) be a nonmaximal prime of \(S\), and put \(P:=Q\cap R\). If \(P\) were the maximal ideal of \(R\), then \(Q\) would define a prime of the closed fiber of \(R\to S\). Since this closed fiber is a field and \(S\) is local, this would force \(Q\) to be the maximal ideal of \(S\), a contradiction. Hence \(P\) is nonmaximal. Since \(nD\) is Cartier on the punctured spectrum of \(R\), the module \(\sO_R(nD)_P\) is a free \(R_P\)-module of rank one. Localizing the divisorial compatibility at \(Q\), we get
\[
\sO_S(nD_S)_Q
\cong
S_Q\otimes_{R_P}\sO_R(nD)_P,
\]
which is a free \(S_Q\)-module of rank one. Thus \(nD_S\) is Cartier at \(Q\). Since \(Q\) was arbitrary, \(nD_S\) is Cartier on the punctured spectrum of \(S\).

We now apply Lemma~\ref{lem:bounded_geom_residual_index} to the local ring
\[
S\cong (A_\Omega)_{\fm_\Omega}
\]
with the field isomorphism \(\sigma:\Omega\to\bbC\), the divisor \(D_S\), and the integer \(n\). The hypotheses have been verified above, and the bounded presentation uses at most \(t+N\) equations of degree at most \(\delta\). Therefore the Cartier index of \(D_S\) at the closed point of \(\Spec S\) divides
\[
ne_{\mathrm{pr}}.
\]
Equivalently, \(ne_{\mathrm{pr}}D_S\) is Cartier on \(\Spec S\).

Using the divisorial compatibility with \(q=ne_{\mathrm{pr}}\), we obtain
\[
S\otimes_R\sO_R(ne_{\mathrm{pr}}D)
\cong
\sO_S(ne_{\mathrm{pr}}D_S).
\]
The right-hand side is a free \(S\)-module of rank one. Since \(R\to S\) is faithfully flat and local, Lemma~\ref{lem:ff_detects_cartier} implies that \(ne_{\mathrm{pr}}D\) is Cartier on \(\Spec R\). Thus the Cartier index of \(D\) divides \(ne_{\mathrm{pr}}\), as required.
\end{proof}

\section{The universal local theorem}\label{sec:universal_local}

In this section, we prove the bounded degree local theorem in all dimensions by induction on the local dimension.

\begin{thm}[Universal local Cartier index bound]\label{thm:universal_local}
Let \(N\), \(t\), and \(\delta\) be integers with \(N\geq 1\), \(t\geq 1\), and \(\delta\geq 1\). Then there exists a positive integer \(C\), depending only on \(N\), \(t\), and \(\delta\), with the following property.

Let \(P_1,\ldots,P_t\) be polynomials in \(\bbC[z_1,\ldots,z_N]\), each of total degree at most \(\delta\), and set
\[
A\coloneqq \bbC[z_1,\ldots,z_N]/(P_1,\ldots,P_t).
\]
Let \(\fp\) be a prime ideal of \(A\), and put \(R\coloneqq A_\fp\). Assume that \(R\) is a normal local domain and that \(R\) has a rational singularity. Let \(D\) be a Weil divisor on \(\Spec R\) that is \(\bbQ\)-Cartier at the closed point of \(\Spec R\). Then the Cartier index of \(D\) at the closed point of \(\Spec R\) divides \(C\).
\end{thm}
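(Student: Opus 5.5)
We prove the theorem by induction on \(\dim R\). Since \(\dim R\le N\), it suffices to construct integers \(C_0\mid C_1\mid\cdots\mid C_N\), each depending only on \((N,t,\delta)\), such that the statement holds with \(C=C_j\) whenever \(\dim R\le j\); then we take \(C:=C_N\). We set \(C_0=C_1=1\). This is justified because a normal local domain of dimension at most one is a field or a discrete valuation ring. In either case every Weil divisor is Cartier, so the Cartier index at the closed point is \(1\). For \(j\ge 2\) we put
\[
C_j:=C_{j-1}\,e_{\mathrm{pr}}(N,t,\delta),
\]
where \(e_{\mathrm{pr}}\) is the integer of Proposition~\ref{prop:prime_residual_bound}.

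For the inductive step, let \(\dim R=j\ge 2\), and let \(D\) be \(\bbQ\)-Cartier at the closed point of \(\Spec R\). We first show that \(C_{j-1}D\) is Cartier on the punctured spectrum \(U\). Take a nonmaximal prime \(Q\) of \(R\). It has the form \(Q=\fq R\) for a prime \(\fq\subsetneq\fp\) of \(A\), and \(R_Q\cong A_\fq\). This is again a prime localization of the same bounded presentation, with \(\dim A_\fq<j\).

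We check that the hypotheses of the theorem pass to \(A_\fq\). First, \(A_\fq\) is normal, being a localization of the normal domain \(R\). Second, it has a rational singularity, since rational singularities localize: a resolution of \(\Spec R\) base changes to a resolution of \(\Spec R_Q\), and the higher direct images localize. Third, the restriction \(D|_{\Spec R_Q}\) is \(\bbQ\)-Cartier at the closed point, because if \(rD\) is Cartier on \(\Spec R\) then its localization is Cartier. The induction hypothesis then shows that the Cartier index of \(D\) at \(Q\) divides \(C_{\dim R_Q}\), which divides \(C_{j-1}\). Hence \(C_{j-1}D\) is Cartier at every point of \(U\).

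We now apply Proposition~\ref{prop:prime_residual_bound} with \(n=C_{j-1}\). Its hypotheses hold: \(R\) is a prime localization of \(A\), it is normal and rational with \(\dim R\ge 2\), \(D\) is \(\bbQ\)-Cartier at the closed point, and \(nD\) is Cartier on the punctured spectrum. The proposition gives that the Cartier index of \(D\) divides \(C_{j-1}e_{\mathrm{pr}}=C_j\), which completes the induction.

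The substantive work was done earlier in Proposition~\ref{prop:prime_residual_bound}, through Lemma~\ref{lem:bounded_geom_residual_index}, Lemma~\ref{lem:residual_index_link}, and Lemma~\ref{lem:semialg_torsion}. The one point in the induction that needs care is that every local ring of \(R\) at a smaller prime is again a prime localization of the \emph{same} algebra \(A\). This is what keeps the constants depending only on \((N,t,\delta)\) and not on the particular ring.
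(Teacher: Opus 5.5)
Your proposal is correct and follows the paper's proof essentially verbatim. It uses the same induction on $\dim R$ with $C_0=C_1=1$ and $C_j=C_{j-1}e_{\mathrm{pr}}(N,t,\delta)$, applies the induction hypothesis to the localizations $R_Q\cong A_\fq$ of the same algebra $A$ to make $C_{j-1}D$ Cartier on the punctured spectrum, and concludes with Proposition~\ref{prop:prime_residual_bound} at $n=C_{j-1}$; your explicit check that $D$ remains $\bbQ$-Cartier after localizing at $Q$ is a small point the paper leaves implicit.
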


\begin{proof}
Fix \(N\geq 1\), \(t\geq 1\), and \(\delta\geq 1\). We construct positive integers \(C_n(N,t,\delta)\), for \(0\leq n\leq N\), such that \(C_n(N,t,\delta)\) works for all local rings in the statement of dimension at most \(n\).

Set
\[
C_0(N,t,\delta)=C_1(N,t,\delta)=1.
\]
A normal local domain of dimension \(0\) is a field, and a normal local domain of dimension \(1\) is a discrete valuation ring. In both cases every Weil divisor is Cartier.

Let \(e_{\mathrm{pr}}(N,t,\delta)\) be the integer supplied by Proposition~\ref{prop:prime_residual_bound}. For \(2\leq n\leq N\), assuming that \(C_{n-1}(N,t,\delta)\) has been constructed, put
\[
I_{\mathrm{low}}:=C_{n-1}(N,t,\delta),
\]
and define
\[
C_n(N,t,\delta):=I_{\mathrm{low}}e_{\mathrm{pr}}(N,t,\delta).
\]
We prove by induction that these integers have the required property. The cases of dimension at most \(1\) were just discussed. Assume now that \(n\ge 2\), and that the assertion is known in dimensions less than \(n\). The cases of dimension less than \(n\) are still covered by \(C_n(N,t,\delta)\), because \(C_{n-1}(N,t,\delta)\) divides \(C_n(N,t,\delta)\).

Let \(R=A_\fp\) be a local ring in the statement with
\[
\dim R=n,
\]
and let \(D\) be a Weil divisor on \(\Spec R\) that is \(\bbQ\)-Cartier at the closed point. We first prove that
\[
I_{\mathrm{low}}D
\]
is Cartier on the punctured spectrum of \(R\). Let \(P\) be a nonmaximal prime of \(R\). Then \(R_P\) is a normal local domain. The localization also has a rational singularity: choosing a resolution of \(\Spec R\) with the vanishing of higher direct images of the structure sheaf, its restriction over \(\Spec R_P\) remains proper and birational, and the higher direct image vanishing localizes. Moreover \(\dim R_P<n\), and \(R_P\) is a prime localization of the original affine \(\bbC\)-algebra \(A\). The induction hypothesis therefore shows that the Cartier index of \(D\) at \(P\) divides \(I_{\mathrm{low}}\). Hence \(I_{\mathrm{low}}D\) is Cartier at \(P\). Since \(P\) was arbitrary, \(I_{\mathrm{low}}D\) is Cartier on the punctured spectrum of \(R\).

We may now apply Proposition~\ref{prop:prime_residual_bound} with the auxiliary integer \(I_{\mathrm{low}}\). It gives that the Cartier index of \(D\) at the closed point of \(\Spec R\) divides
\[
I_{\mathrm{low}}e_{\mathrm{pr}}(N,t,\delta)=C_n(N,t,\delta).
\]
This proves the induction step.

Since every prime localization of a quotient of \(\bbC[z_1,\ldots,z_N]\) has dimension at most \(N\), the integer
\[
C:=C_N(N,t,\delta)
\]
proves the theorem.
\end{proof}

\section{Globalization, field descent, and proof of the main theorem}\label{sec:layer012}

In this section, we pass from the universal local theorem to the global bounded family statement.

\begin{lem}[Closed point reduction for $\tci$]\label{lem:closed_point_tci}
Let $k$ be an algebraically closed field. Let $B$ be a finite type $k$-scheme. Let $\pi\colon X\to B$ be a projective morphism. Let $I$ be a positive integer. Assume that for every closed point $b$ of $B$, every normal projective pure-dimensional fibre $Y=X_b$, every closed point $y$ of $Y$, and every Weil $\bbQ$-Cartier divisor $D$ on $Y$, the Cartier index of the localized divisor $D_y$ on $\Spec(\sO_{Y,y})$ divides $I$.

Then, for every closed point $b$ of $B$ such that $Y=X_b$ is a normal projective pure-dimensional fibre, the total Cartier index $\tci(Y)$ divides $I$.
\end{lem}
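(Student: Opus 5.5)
The plan is to show that $ID$ is Cartier on $Y$ for every $\bbQ$-Cartier Weil divisor $D$ on $Y$. Then the Cartier index of each such $D$ divides $I$, and so the least common multiple $\tci(Y)$ divides $I$ as well. Fix $b$, the fibre $Y=X_b$, and such a $D$.

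First, Cartierness of $ID$ is a local property: $ID$ is Cartier on $Y$ if and only if $\sO_Y(ID)_y$ is a free $\sO_{Y,y}$-module of rank one at every scheme-theoretic point $y\in Y$. We also use that localization commutes with forming divisorial modules, so $\sO_Y(ID)_y\cong \sO_{\Spec \sO_{Y,y}}(ID_y)$. This identification holds because both sides are reflexive of rank one and agree at the codimension one points of $\Spec\sO_{Y,y}$, which are exactly the codimension one points of $Y$ specializing to $y$. Moreover, $D_y$ is $\bbQ$-Cartier on $\Spec\sO_{Y,y}$, since if $rD$ is Cartier on $Y$ then $rD_y$ is Cartier.

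Second, we treat closed points. For a closed point $y$, the hypothesis says that the Cartier index of $D_y$ divides $I$. Hence $ID_y$ is Cartier, meaning $\sO_Y(ID)_y$ is free of rank one.

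Third, we pass to non-closed points, which is the only real subtlety since the hypothesis concerns only closed points. Let $y'$ be an arbitrary point of $Y$. Because $Y$ is of finite type over $k$, the closure $\overline{\{y'\}}$ contains a closed point $y$, so $y'$ is a generization of $y$. Then $\sO_{Y,y'}$ is a localization of $\sO_{Y,y}$, and $\sO_Y(ID)_{y'}$ is the corresponding localization of the free rank one module $\sO_Y(ID)_y$, hence free of rank one. Equivalently, the set of points where $\sO_Y(ID)$ is invertible is open, and an open subset containing all closed points of a finite type $k$-scheme is the whole scheme.

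Therefore $ID$ is Cartier on $Y$, the Cartier index of $D$ divides $I$, and taking the least common multiple over all $D$ gives $\tci(Y)\mid I$. No step is a serious obstacle; the only thing to check carefully is the compatibility $\sO_Y(ID)_y\cong\sO_{\sO_{Y,y}}(ID_y)$ between divisorial modules and localization. This follows from the intersection description in Definition~\ref{def:div_module}, because localization commutes with the finite intersection defining the module over the normal noetherian ring.
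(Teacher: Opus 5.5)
Your proposal is correct and follows the same route as the paper: the hypothesis gives freeness of $\sO_Y(ID)$ at every closed point, and the paper then concludes exactly as in your reformulation, namely that the locally free locus is open and $Y$ is Jacobson (your generization argument is an equivalent phrasing). The only addition on your side is the explicit check that divisorial modules commute with localization, which the paper takes for granted; your reflexivity argument for it is the right one, whereas the ``finite intersection'' remark is inaccurate, since the intersection in Definition~\ref{def:div_module} runs over all height one primes.
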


\begin{proof}
Fix a closed point $b$ of $B$ such that $Y=X_b$ is normal, projective, pure-dimensional. Let $D$ be a Weil $\bbQ$-Cartier divisor on $Y$. The hypothesis says that, for every closed point $y$ of $Y$, the divisor $ID_y$ is Cartier on $\Spec(\sO_{Y,y})$. Equivalently, the divisorial $\sO_Y$-module $\sO_Y(ID)$ is locally free of rank one at every closed point of $Y$.

Let $V$ be the locally free locus of the coherent $\sO_Y$-module $\sO_Y(ID)$. The set $V$ is open in $Y$, and contains every closed point of $Y$. Since $Y$ is of finite type over the algebraically closed field $k$, the scheme $Y$ is Jacobson; every nonempty closed subset of $Y\setminus V$ would contain a closed point of $Y$. The complement $Y\setminus V$ has no closed point, so $Y\setminus V$ is empty. Thus $\sO_Y(ID)$ is locally free of rank one on all of $Y$, and $ID$ is Cartier on $Y$.

The argument applies to every Weil $\bbQ$-Cartier divisor $D$ on $Y$. Therefore the Cartier index of every such $D$ divides $I$, so $\tci(Y)$ divides $I$.
\end{proof}

\begin{thm}[Complex projective families]\label{thm:globalization_C}
Let $B$ be a finite type complex scheme, and let $\pi\colon X\to B$ be a projective morphism.

Then there exists a positive integer $I$ with the following property. For every closed point $b$ of $B$, if the fibre $Y=X_b$ is a normal projective pure-dimensional complex variety and every local ring of $Y$ has a rational singularity, then $\tci(Y)$ divides $I$.
\end{thm}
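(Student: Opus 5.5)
The plan is to reduce Theorem~\ref{thm:globalization_C} to Theorem~\ref{thm:universal_local} via Lemma~\ref{lem:closed_point_tci}. For that we need a single integer $I$ such that, for every good fibre $Y=X_b$, every closed point $y\in Y$ and every $\bbQ$-Cartier Weil divisor $D$ on $Y$, the Cartier index of $D_y$ on $\Spec\sO_{Y,y}$ divides $I$. So the whole task is to present every such local ring $\sO_{Y,y}$ as a prime localization of an affine $\bbC$-algebra cut out by a bounded number of equations of bounded degree in a bounded number of variables.

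First, cover $X$ by affine charts that are uniform in $b$. Since $\pi$ is projective and $B$ is of finite type, we can cover $B$ by finitely many affine opens $B_j=\Spec \Lambda_j$ with closed immersions $X_{B_j}\hookrightarrow \bbP^{r}_{B_j}$. Write each $\Lambda_j$ as $\bbC[w_1,\ldots,w_{p_j}]/\mathfrak a_j$, and choose homogeneous generators of the ideal of $X_{B_j}$. Their coefficients are finitely many polynomials in the $w$'s. Dehomogenizing on the standard charts $\{x_i\neq 0\}$ gives finitely many affine schemes $X_{j,i}\subset \bbA^{r}_{B_j}$. For a closed point $b\in B_j$, the fibre chart $Y_i=Y\cap\{x_i\ne0\}$ is the closed subscheme of $\bbA^{r}_\bbC$ defined by the specialised polynomials. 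The number $t$ of these equations and their degree $\delta$ are independent of $b$, and $N=r$ is fixed. Specialising the coefficients at $b$ only changes the complex coefficients, not the degree or the count.

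Next, for a good fibre $Y$ and a closed point $y\in Y$, pick a chart with $y\in Y_i$. Then $\sO_{Y,y}=A_\fp$ with $A=\bbC[z_1,\ldots,z_N]/(P_1,\ldots,P_t)$, where $\fp$ is the maximal ideal of $y$. This local ring is a normal domain with a rational singularity by hypothesis. The localized divisor $D_y$ is $\bbQ$-Cartier at the closed point, since $D$ is $\bbQ$-Cartier on $Y$. Theorem~\ref{thm:universal_local} then shows that the Cartier index of $D_y$ divides $C(N,t,\delta)$. Take $I$ to be the lcm of the constants $C(N,t_{j,i},\delta_{j,i})$ over the finitely many charts, and conclude with Lemma~\ref{lem:closed_point_tci}.

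The main point needing care is the uniformity of the chart presentation. One must ensure that the scheme-theoretic fibre $X_b$ is exactly cut out by the specialised generators, so that no saturation or extra equations depending on $b$ are required. This holds because the fibre of a closed subscheme $V(\mathcal I)\subset\bbP^r_{B_j}$ over $b$ is $V(\mathcal I\cdot \sO_{\bbP^r_{\kappa(b)}})$, which is generated by the images of the chosen generators. Also, Theorem~\ref{thm:universal_local} requires no reducedness or primality of $A$ itself, only that the localization $A_\fp$ be normal, so no saturation issue arises. A minor point is that if the fibre chart is empty or $t=0$, we can pad with the zero polynomial so that $t\ge1$.
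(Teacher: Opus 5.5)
Your proposal is correct and follows the paper's proof essentially verbatim: you embed into $\bbP^r$ over finitely many affine opens of $B$, specialize chart equations of bounded number and degree at $b$, apply Theorem~\ref{thm:universal_local} on each chart, take the lcm of the finitely many constants, and conclude with Lemma~\ref{lem:closed_point_tci}. The one small detail the paper adds is forcing $N\ge 1$, by composing with $\bbP^0\hookrightarrow\bbP^1$ when $r=0$, since Theorem~\ref{thm:universal_local} is stated only for $N\ge 1$.
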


\begin{proof}
Since $\pi$ is projective, there is an integer $N_0\geq 0$ and a closed immersion
\[
i_0\colon X\hookrightarrow B\times_{\bbC}\bbP^{N_0}_{\bbC}
\]
over $B$. If $N_0=0$, we compose $i_0$ with the standard closed immersion $\bbP^0_{\bbC}\hookrightarrow\bbP^1_{\bbC}$. Thus, after replacing $N_0$ by some $N\geq 1$, we may and do fix a closed immersion
\[
i\colon X\hookrightarrow B\times_{\bbC}\bbP^N_{\bbC}
\]
over $B$.

Choose a finite affine open cover \(B_\alpha = \Spec(S_\alpha)\) of \(B\). For each standard affine chart \(U_j\) of \(\bbP^N_\bbC\), the closed subscheme \(X\cap (B_\alpha\times U_j)\) is cut out inside \(B_\alpha\times \bbA^N_\bbC\) by finitely many polynomials in the affine coordinates on \(U_j\) with coefficients in \(S_\alpha\). After padding by the zero polynomial if necessary, let \(t_{\alpha,j}\ge 1\) be the number of such equations and let \(\delta_{\alpha,j}\) be an upper bound for their total degrees in the affine fiber coordinates. There are only finitely many pairs \((\alpha,j)\).

For each pair \((\alpha,j)\), Theorem~\ref{thm:universal_local} gives a positive integer \(C_{\alpha,j}\), depending only on \(N\), \(t_{\alpha,j}\), and \(\delta_{\alpha,j}\), with the following property: every normal rational prime localization of every complex affine \(N\)-space subscheme defined by at most \(t_{\alpha,j}\) equations of degree at most \(\delta_{\alpha,j}\) has all closed point local \(\bbQ\)-Cartier Weil divisor indices dividing \(C_{\alpha,j}\).

Define
\[
I_0\coloneqq \lcm_{\alpha,j} C_{\alpha,j}.
\]

We verify the local hypothesis of Lemma~\ref{lem:closed_point_tci}. Let \(b\) be a closed point of \(B\), let \(Y=X_b\) be a normal projective pure-dimensional fiber with rational singularities at all local rings, let \(y\) be a closed point of \(Y\), and let \(D\) be a Weil \(\bbQ\)-Cartier divisor on \(Y\). Choose \(\alpha\) with \(b\in B_\alpha\) and choose a standard affine chart \(U_j\) of \(\bbP^N_\bbC\) containing the image of \(y\). After specializing the equations for \(X\cap (B_\alpha\times U_j)\) at \(b\), the affine chart \(Y\cap U_j\) is a closed subscheme of \(\bbA^N_\bbC\) defined by at most \(t_{\alpha,j}\) complex polynomials of degree at most \(\delta_{\alpha,j}\). The local ring \(\sO_{Y,y}\) is the localization at the prime corresponding to \(y\) of the coordinate ring of this affine chart.

The localized divisor \(D_y\) on \(\Spec(\sO_{Y,y})\) is \(\bbQ\)-Cartier at the closed point. The local ring \(\sO_{Y,y}\) is a normal local domain with rational singularity by hypothesis on \(Y\). Theorem~\ref{thm:universal_local} therefore shows that the Cartier index of \(D_y\) divides \(C_{\alpha,j}\), hence divides \(I_0\).

Lemma~\ref{lem:closed_point_tci} applies with \(I=I_0\), and gives \(\tci(Y)\mid I_0\) for every fiber \(Y\) in the statement.
\end{proof}

\subsection{Proof of the main theorem}\label{sec:main_proof}

\begin{thm}[$=$Theorem \ref{thm:main}]\label{thm:projective_descent}
Let $k$ be an algebraically closed field of characteristic $0$. Let $B$ be a finite type $k$-scheme, and let $\pi\colon X\to B$ be a projective morphism. Then there exists a positive integer $I$ with the following property. For every closed point $b$ of $B$, if the fibre $Y=X_b$ is a normal projective pure-dimensional $k$-variety and every local ring of $Y$ has a rational singularity, then $\tci(Y)$ divides $I$.
\end{thm}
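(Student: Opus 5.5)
The plan is to reduce to Theorem~\ref{thm:globalization_C} by spreading out and embedding into \(\bbC\), as sketched in the introduction. The cleanest route avoids spreading out individual fibres. Instead, the complex family itself will carry the bound, and the bound depends only on the numerical data \(N\), \(t_{\alpha,j}\), \(\delta_{\alpha,j}\) in the proof of Theorem~\ref{thm:globalization_C}.

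First fix the data. Take a closed immersion \(X\hookrightarrow B\times\bbP^N_k\) with \(N\ge1\), a finite affine cover \(B_\alpha=\Spec S_\alpha\), and for each standard chart \(U_j\) finitely many equations of \(X\cap(B_\alpha\times U_j)\) in \(S_\alpha[z_1,\dots,z_N]\). Let \(t_{\alpha,j}\) be their number and \(\delta_{\alpha,j}\) a bound on their fibre degrees. Set \(I:=\lcm_{\alpha,j}C_{\alpha,j}\), where \(C_{\alpha,j}\) comes from Theorem~\ref{thm:universal_local} applied to \((N,t_{\alpha,j},\delta_{\alpha,j})\). By Lemma~\ref{lem:closed_point_tci}, which holds over any algebraically closed \(k\), it suffices to show the following: for a closed point \(b\), a fibre \(Y=X_b\) as in the statement, a closed point \(y\in Y\cap U_j\), and a \(\bbQ\)-Cartier Weil divisor \(D\) on \(Y\), the divisor \(ID_y\) is Cartier on \(\Spec\sO_{Y,y}\).

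Now pass to \(\bbC\). The specialized equations \(F_1,\dots,F_t\in k[z_1,\dots,z_N]\), the coordinates \(a\) of \(y\), and the finitely many coefficients needed to write \(D_y\) all involve only finitely many elements of \(k\). These are the generators of the primes in the support of \(D\) on the affine chart. Let \(k_0\subset k\) be the subfield they generate over \(\bbQ\), which is finitely generated. Choose an embedding \(k_0\hookrightarrow\bbC\), which exists by cardinality and transcendence degree, and extend it to an embedding \(\bar k_0\hookrightarrow\bbC\). Set \(A_0=\bar k_0[z]/(F)\) with maximal ideal \(\fm_0\) at \(a\), and \(R_0=(A_0)_{\fm_0}\). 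The maps \(R_0\to\sO_{Y,y}=(k\otimes_{\bar k_0}A_0)_{\fm}\) and \(R_0\to(\bbC\otimes_{\bar k_0}A_0)_{\fm_\bbC}\) are flat and local, and each closed fibre is a field, since \(\bar k_0\) is algebraically closed and \(a\) is rational over it. Hence both maps are faithfully flat.

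Next move the hypotheses along these maps. The prime divisors of \(D_y\) are defined over \(\bar k_0\). After enlarging \(k_0\), I would descend \(D_y\) to a divisor \(D_0\) on \(\Spec R_0\) whose flat divisorial base change, in the sense of Lemma~\ref{lem:generalized_flat_pullback}, is \(D_y\). Normality and rationality of \(R_0\) descend. I would argue that \(R_0\) is normal because \(R_0\to\sO_{Y,y}\) is faithfully flat with regular fibres, and rational because a resolution of \(\Spec R_0\) can be descended from one of \(\sO_{Y,y}\) after enlarging \(k_0\). The vanishing of higher direct images then descends by faithful flatness. Lemma~\ref{lem:alg_field_rational} then gives that \(R_\bbC\) is normal and rational. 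For the \(\bbQ\)-Cartier property: \(rD_y\) is Cartier, so by Lemmas~\ref{lem:generalized_flat_pullback} and~\ref{lem:ff_detects_cartier} \(rD_0\) is Cartier, and hence so is its base change \(rD_\bbC\). Theorem~\ref{thm:universal_local} applies to \(R_\bbC\) with numerical data \((N,t_{\alpha,j},\delta_{\alpha,j})\) and gives \(C_{\alpha,j}D_\bbC\) Cartier. Lemma~\ref{lem:ff_detects_cartier} brings this back to \(C_{\alpha,j}D_0\), and base change makes \(C_{\alpha,j}D_y\) Cartier, which divides \(I\).

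I expect the main obstacle to be the descent of the hypotheses to \(R_0\), above all rationality and the exact descent of the divisor. I would handle this by the standard spreading-out procedure. Descend a resolution and the divisor over a finitely generated \(\bbQ\)-algebra inside \(k\), then specialize at a \(\bar k_0\)-point. A cleaner alternative is to work with \(k_0\subset k\) algebraically closed of countable transcendence degree, chosen large enough that all the finitely many objects are defined over it.
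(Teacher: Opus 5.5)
Your proof is correct, but it is organized differently from the paper's.

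\textbf{The paper's route.} The paper first spreads out the whole family to a finitely generated field $k_0$ and fixes one complex model $\pi_{\bbC}$. It takes $I$ from Theorem~\ref{thm:globalization_C} applied to that model. Then, for each fibre, it spreads out $Y$, $D$ and the global hypotheses (normality, pure dimension, rationality at all local rings) to a finitely generated field $k_1\supset k_0$. It realizes $Y_1\otimes_{k_1}\bbC$ as a closed fibre of $\pi_{\bbC}$ and applies the complex theorem globally. Finally it descends Cartierness pointwise from $Y_{\bbC}$ to $Y_1$, and then base changes to $Y$.

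\textbf{Your route.} You apply Lemma~\ref{lem:closed_point_tci} over $k$ at the outset. You take $I$ directly from the numerical data $(N,t_{\alpha,j},\delta_{\alpha,j})$ of the $k$-family, using that the constant in Theorem~\ref{thm:universal_local} depends only on these numbers. This makes Theorem~\ref{thm:globalization_C} and the complex model of the family unnecessary, and you only ever descend a single local ring.

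Descending to the algebraically closed field $\bar k_0$, over which the point is rational, makes both closed fibres fields. Faithful flatness is then immediate, and the divisor descends cleanly. For a height one prime $Q$ of the chart, put $Q_0:=Q\cap A_0$. Once $\bar k_0$ contains the coefficients of generators of $Q$, faithful flatness gives $Q_0A_k=Q$. Hence $Q_0$ has height one, $Q$ is the only height one prime over it, and the ramification index is $1$. The remaining cost is local descent of normality, for which faithful flatness alone suffices, and of rationality. For rationality it is simplest to take any resolution of $\Spec A_0$ over $\bar k_0$ and descend the vanishing of the higher direct images by flat base change.

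\textbf{Trade-off.} Your local descent is lighter than the paper's descent of the global fibre properties and of the fibre itself into $\pi_{\bbC}$, which the paper handles by a spreading-out sketch. The paper's route, in exchange, isolates all complex geometry in Theorem~\ref{thm:globalization_C} and treats the general $k$ as a pure transfer statement.

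\textbf{One technicality.} Lemma~\ref{lem:alg_field_rational} is stated for a domain. If $Y\cap U_j$ is reducible, first replace $A_0$ by its quotient by the unique minimal prime contained in $\fm_0$, as in the proof of Proposition~\ref{prop:prime_residual_bound}. Theorem~\ref{thm:universal_local} itself is applied to the localization of $\bbC[z_1,\ldots,z_N]/(F_1^{\tau},\ldots,F_t^{\tau})$ at $\fm_{\bbC}$, where $F_i^{\tau}$ denotes $F_i$ with the embedding $\tau\colon\bar k_0\hookrightarrow\bbC$ applied to its coefficients. That step needs no such modification.
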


\begin{proof}
Choose a finitely generated subfield $k_0\subset k$ and a projective model
\[
\pi_0\colon X_0\to B_0
\]
over $k_0$ whose base change to $k$ is $\pi\colon X\to B$. Since $\pi_0$ is projective, after enlarging $k_0$ if necessary we may also choose an integer $N_0\geq 0$ and a closed immersion
\[
i_0\colon X_0\hookrightarrow B_0\times_{k_0}\bbP^{N_0}_{k_0}
\]
over $B_0$. If $N_0=0$, compose with $\bbP^0_{k_0}\hookrightarrow\bbP^1_{k_0}$, so that the model is embedded in some $B_0\times_{k_0}\bbP^N_{k_0}$ with $N\geq 1$. This embedding is only used to make the bounded degree reduction in the complex model; the statement itself depends only on the projective morphism.

Choose an embedding $\tau_0\colon k_0\hookrightarrow\bbC$, and let
\[
\pi_\bbC\colon X_\bbC\to B_\bbC
\]
be the base change of $\pi_0$ along $\tau_0$. Let $I$ be the integer supplied by Theorem~\ref{thm:globalization_C} for the complex projective family $\pi_\bbC$.

We prove that this \(I\) works over \(k\). Let \(b\) be a closed point of \(B\), put \(Y\coloneqq X_b\), and assume that \(Y\) is normal, projective, pure-dimensional, and has rational singularities at all local rings. Let \(D\) be a Weil \(\bbQ\)-Cartier divisor on \(Y\). It is enough to prove that \(ID\) is Cartier.

The point \(b\), the finitely many prime divisors in the support of \(D\), their coefficients, and the finite presentation data expressing that \(D\) is \(\bbQ\)-Cartier are defined over a finitely generated extension field of \(k_0\) inside \(k\). More concretely, choose affine charts of \(B_0\), \(X_0\), and \(Y\), choose equations for the closed point \(b\) in those charts, choose equations for the irreducible codimension one components in the support of \(D\), and choose local generators witnessing that some positive multiple of \(D\) is Cartier. All these data involve only finitely many coefficients in \(k\). After adjoining those coefficients to \(k_0\), we obtain a finitely generated field \(k_1\) with \(k_0\subset k_1\subset k\), a \(k_1\)-point \(b_1\) of \(B_0\times_{k_0}k_1\), a fiber \(Y_1\), and a Weil \(\bbQ\)-Cartier divisor \(D_1\) on \(Y_1\) whose base change to \(k\) is \((Y,D)\).

We also need \(Y_1\) to have the same singularity properties as \(Y\). Normality, projectivity, and pure dimensionality are open or constructible conditions in the finite presentation family after the above choices. For rational singularities, choose affine charts and a resolution with the vanishing of higher direct images after base change to \(k\); the resolution, the relevant coherent sheaves, and the vanishing of the finitely many cohomology groups involved are defined after adjoining finitely many further coefficients. Enlarging \(k_1\) once more, we may therefore arrange that \(Y_1\) is normal, projective, pure-dimensional, and has rational singularities at all local rings.

Choose an embedding $\tau_1\colon k_1\hookrightarrow\bbC$ extending $\tau_0$, and set
\[
Y_\bbC\coloneqq Y_1\times_{k_1,\tau_1}\bbC.
\]
Then \(Y_\bbC\) is a closed fiber of the complex projective family \(X_\bbC\to B_\bbC\). Projectivity and pure dimension are preserved by field extension. For every point \(y_\bbC\in Y_\bbC\), with image \(y_1\in Y_1\), Lemma~\ref{lem:alg_field_rational} applied on an affine neighbourhood of \(y_1\) and to the field extension \(k_1\subset\bbC\) shows that \(\sO_{Y_\bbC,y_\bbC}\) is a normal local domain with a rational singularity. Hence \(Y_\bbC\) satisfies the hypotheses of Theorem~\ref{thm:globalization_C}.

We now base change the divisor. On normal affine open subsets, and componentwise if the base change is disconnected, apply Lemma~\ref{lem:generalized_flat_pullback} to the induced injective flat homomorphisms of normal domains coming from \(k_1\subset\bbC\). These local constructions agree on overlaps, because on every common normal affine open subset the coefficient of a height one prime is computed by the same valuation formula of Lemma~\ref{lem:generalized_flat_pullback}. Thus they glue to a Weil divisor \(D_\bbC\) on \(Y_\bbC\), namely the flat divisorial base change of \(D_1\), such that for every integer \(q\geq 1\),
\[
\sO_{Y_\bbC}(qD_\bbC)
\cong
\bbC\otimes_{k_1}\sO_{Y_1}(qD_1).
\]
If \(rD_1\) is Cartier, then the displayed isomorphism with \(q=r\) shows that \(rD_\bbC\) is Cartier. Hence \(D_\bbC\) is \(\bbQ\)-Cartier.

By Theorem~\ref{thm:globalization_C}, the Cartier index of every Weil \(\bbQ\)-Cartier divisor on \(Y_\bbC\) divides \(I\). Applying this to \(D_\bbC\), we get that \(ID_\bbC\) is Cartier on \(Y_\bbC\).

We descend this Cartierness to \(Y_1\). Let \(y_1\in Y_1\) be any point. Since \(k_1\to\bbC\) is faithfully flat, there is a point \(y_\bbC\in Y_\bbC\) lying over \(y_1\). The local homomorphism
\[
\sO_{Y_1,y_1}\longrightarrow \sO_{Y_\bbC,y_\bbC}
\]
is flat and local with nonzero closed fiber, hence faithfully flat. Localizing the displayed divisorial module compatibility at \(y_\bbC\) and taking \(q=I\) gives
\[
\sO_{Y_\bbC,y_\bbC}(ID_\bbC)
\cong
\sO_{Y_\bbC,y_\bbC}\otimes_{\sO_{Y_1,y_1}}\sO_{Y_1,y_1}(ID_1).
\]
The left-hand side is a free module of rank one. By Lemma~\ref{lem:ff_detects_cartier}, \(\sO_{Y_1,y_1}(ID_1)\) is a free \(\sO_{Y_1,y_1}\)-module of rank one. Since \(y_1\) was arbitrary, \(ID_1\) is Cartier on \(Y_1\).

Finally, base change from \(k_1\) to \(k\) carries \(\sO_{Y_1}(ID_1)\) to \(\sO_Y(ID)\), so \(ID\) is Cartier on \(Y\). Since \(D\) was arbitrary, every Cartier index entering \(\tci(Y)\) divides \(I\). Thus \(\tci(Y)\) divides \(I\).
\end{proof}

\end{document}